\numberwithin{equation}{section}
\newtheorem{theorem}{Theorem}[section]
\newtheorem{proposition}[theorem]{Proposition}
\newtheorem{lemma}[theorem]{Lemma}
\theoremstyle{definition}
\newtheorem{definition}[theorem]{Definition}
\newtheorem{remark}[theorem]{Remark}
\newcommand{\ds}{\displaystyle}
\def\R{\mathbb R}
\begin{document}

\title[Fractional Schr\"{o}dinger Equations]
{Multi-peak semiclassical bound states  for Fractional Schr\"{o}dinger Equations with fast decaying potentials}

 \author{Xiaoming An\,\,\,and \,\,\,Shuangjie Peng }

 \address{School of Mathematics and Statistics\, \&\, Guizhou University of Finance and Economics, Guiyang, 550025, P. R. China}

\email{xman@mail.gufe.edu.cn}

\address{School of Mathematics and Statistics\, \&\, Hubei Key Laboratory of Mathematical Sciences, Central China
Normal University, Wuhan, 430079, P. R. China }

\email{sjpeng@mail.ccnu.edu.cn}

\begin{abstract}
We study the following fractional Schr\"{o}dinger equation
\begin{equation*}\label{eq0.1}
\varepsilon^{2s}(-\Delta)^s u + V(x)u = f(u), \,\,x\in\mathbb{R}^N,
\end{equation*}
 where $s\in(0,1)$. Under some conditions on $f(u)$, we show that the problem has a family of solutions concentrating at any finite given local minima of $V$ provided that $V\in C(\R^N,[0,+\infty))$. All decay rates of $V$ are admissible. Especially, $V$ can be compactly supported.  Different from the local case $s=1$ or the case of single-peak solutions, the nonlocal effect of the operator $(-\Delta)^s$ makes the peaks of the candidate solutions affect mutually, which causes more difficulties in finding solutions with multiple bumps. The methods in this paper are penalized technique and variational method.

\end{abstract}

\keywords{
{variational method; fractional Schr\"{o}dinger; multi-peak; compactly supported;  penalized technique.}
}

\maketitle

\section{Introduction and main results}

In this paper, we consider the fractional Schr\"{o}dinger equation
\begin{equation}\label{eq1.1}
\varepsilon^{2s}(-\Delta)^s u + V(x)u = f(u),\,\,x\in\,\,\mathbb{R}^N,
\end{equation}
where $N > 2s$, $s\in (0,1)$, $V$ is a continuous function, $\varepsilon > 0$ is a small parameter, $f:\R^N\to\R$ is a nonlinear function. Problem~\eqref{eq1.1} is derived from the study of time-independent waves $\psi(x,t) = e^{-iEt}u(x)$ of the following nonlinear fractional Schr\"{o}dinger equation
$$
i\varepsilon\frac{\partial{\psi}}{{\partial t}} = \varepsilon^{2s}(-\Delta)^s{\psi} + U(x)\psi - f(\psi)\ x\in \mathbb{R}^N.
\eqno(NLFS)
$$
For example, letting $f(t) = |t|^{p - 2}t$, $V(x) = U(x) - E$ and inserting $\psi(x,t) = e^{-iEt}u(x)$ into $(NLFS)$, one can show that $(NLFS)$ is
\begin{equation}\label{MMMeq1.2}
\varepsilon^{2s}(-\Delta)^s u + V(x) u = |u|^{p-2}u.
\end{equation}

In physics, Eq \eqref{eq1.1} can be used to describe some properties of Einstein's theory of relativity and also has been derived as models of many physical phenomena, such as phase transition, conservation laws, especially in fractional quantum mechanics, etc., \cite{FQT}. $(NLFS)$ was introduced by Laskin (\cite{2}, \cite{3}) as an extension of the classical nonlinear Schr\"{o}dinger equations $s = 1$ in which the Brownian motion of the quantum
paths is replaced by a L\`{e}vy flight. To see more physical backgrounds, we refer to \cite{4}.

In this paper, we are interesting in semiclassical analysis of \eqref{eq1.1}. From a mathematical point of view, the transition from
quantum to classical mechanics can be formally performed by letting $\varepsilon\to 0$. For small $\varepsilon > 0$, solutions $u_{\varepsilon}$ are usually referred to as semiclassical bound states.

In the local case $s = 1$, the study of the nonlinear Schr\"odinger equation
$$
-\varepsilon^2\Delta u + V(x) u = f(u)
\eqno(NLS)
$$
has been extensively investigated in the semiclassical regime and a considerable amount of
work has been done, showing that existence and concentration phenomena of single- and
multi-bump solutions occur at critical points of the electric potential $V$ when $\varepsilon\to 0$, see \cite{{5},{6},{7},{8},{9},{10},{11},{12},{13},{14},{15},{16},{17},{BDP},{CN}} and the references therein for example.

To our best knowledge, there are few results on the semiclassical bound states to problem \eqref{eq1.1} in the nonlocal case $s\in (0,1)$.
Basing on the well-known non-degenerate results in \cite{{19},{20}} and the mathematical reduction method,  it was proved  in  \cite{{25},{26},{27}} that  problem \eqref{MMMeq1.2} has solutions concentrating at the prescribed non-degenerate critical points of $V$ when $\varepsilon\to 0$.
When $\inf_{x\in \mathbb{R}^N}V(x) > 0$ and $V$ has local minimum which may be degenerate, Alves et al. in \cite{100} used the penalized method developed by del Pino et al. in \cite{10} and the extension method developed by Caffarelli et al. in \cite{24} to construct solutions concentrating at a  local minimum of $V$ when $\varepsilon\to 0$.  Successively, assuming more weakly that $\liminf_{|x|\to\infty}V(x)|x|^{2s} \ge 0$, in \cite{APX,ADP-AA-2021}, solutions concentrating at a local minimum of $V$  were also obtained. We point out here that the solutions found in \cite{100} and \cite{APX} have exactly one local maximum and hence  are single-peaked.

However, concerning \eqref{eq1.1}, up to now there are no research on the multi-bump solutions in the case that the potentials $V(x)$ vanish at infinity and  critical points of $V(x)$ are degenerate. The main difficulty lies in that for a suitable function $u:\ \R^N\to\R$, under the nonlocal effects of $(-\Delta)^s$, one can not compute $(-\Delta )^su$  as precisely  as $-\Delta u$. Moreover, the nonlocal operator $(-\Delta)^s$ makes the peaks of the candidate solutions affect mutually, which causes more difficulties in finding solutions  with multiple bumps (see the estimates of \eqref{MMMeq2.26}, \eqref{MMMeq2.28} and \eqref{MMMeq2.29} in Lemma \ref{leA.2} for example).

This paper devotes to finding  solutions with multiple bumps for more general potentials including fast decaying potentials, i.e.,
$$
\liminf_{|x|\to\infty}V(x)|x|^{2s}=0,
$$
in which, a typical  case  is  that  $V$ is compactly supported.

In order to state our main result, we need to introduce  some notations and assumptions. For $s\in(0,1)$, the fractional Sobolev space $H^s(\mathbb{R}^N)$ is defined as
$$
H^s(\mathbb{R}^N) = \Big\{u\in L^2(\mathbb{R}^N):\frac{u(x) - u(y)}{|x - y|^{N/2 + s}}\in L^2(\mathbb{R}^N\times\mathbb{R}^N)\Big\},
$$
endowed with the norm
$$
\|u\|_{H^s(\mathbb{R}^N)} = \Big(\int_{\mathbb{R}^N}|(-\Delta)^{s/2}u|^2 + u^2\,  {\mathrm{d}}x \Big)^{\frac{1}{2}},
$$
where
$$
\int_{\mathbb{R}^N}|(-\Delta)^{s/2}u|^2{\mathrm{d}}x  = \int_{\mathbb{R}^{2N}}\frac{|u(x) - u(y)|^2}{|x - y|^{N + 2s}}\,{\mathrm{d}}x \,  {\mathrm{d}}y.
$$
Like the classical case, we define the space $\dot{H}^s(\R^N)$ as the completion of $C^{\infty}_c(\R^N)$ under the norm
$$
\|u\|^2 = \int_{\mathbb{R}^N}|(-\Delta)^{s/2}u|^2{\mathrm{d}}x  = \int_{\mathbb{R}^{2N}}\frac{|u(x) - u(y)|^2}{|x - y|^{N + 2s}}\,{\mathrm{d}}x \,  {\mathrm{d}}y.
$$
Define the following fractional Sobolev space
$$
W^{s,2}(\Omega) = \Big\{u\in L^2(\Omega): \frac{|u(x) - u(y)|}{|x - y|^{\frac{N}{2} + s}}\in L^2(\Omega\times\Omega)\Big\}.
$$
It is easy to check that with the inner product
$$
(u,v) = \int_{\Omega}\int_{\Omega}\frac{(u(x) - u(y))(v(x) - v(y))}{|x - y|^{N + 2s}}dxdy + \int_{\Omega}uvdx\ \ \forall u,v\in W^{s,2}(\Omega),
$$
$W^{s,2}(\Omega)$ is a Hilbert space (see \cite{4} for details). According to \cite{4}, the fractional Laplacian   is defined as
\begin{align*}
(-\Delta)^s u(x) \, &= \, C  (N,s)P.V.\int_{\mathbb{R}^N}\frac{u(x) - u(y)}{|x - y|^{N + 2s}}\,{\mathrm{d}}y
\\[2mm]
\, & =\,   C(N,s)\lim_{\varepsilon\to 0}\int_{\mathbb{R}^N          \backslash        B_{\varepsilon}(x)      }\frac{u(x) - u(y)}{|x - y|^{N + 2s}}\,{\mathrm{d}}y.
\end{align*}
For the sake of simplicity, we define for every $u\in \dot{H}^s(\R^N)$ the fractional $(-\Delta)^su$ as
$$
(-\Delta)^s u(x) = \int_{\mathbb{R}^N}\frac{u(x) - u(y)}{|x - y|^{N + 2s}}\,{\mathrm{d}}y.
$$
Our solutions will be found in the following weighted fractional Sobolev  space:
$$
\mathcal{D}^s_{V,\varepsilon}(\mathbb{R}^N) = \left\{u\in \dot{H}^s(\R^N):\,\,u\in L^2\big(\mathbb{R}^N,V(x)\,{\mathrm{d}}x\big)\right\},
$$
endowed with the norm
$$
\|u\|_{\mathcal{D}^s_{V,\varepsilon}(\mathbb{R}^N)} = \Big(\int_{\mathbb{R}^N}\varepsilon^{2s}|(-\Delta)^{s/2}u|^2 + Vu^2\,{\mathrm{d}}x \Big)^{\frac{1}{2}}.
$$

For the nonlinear term $f(u)$, we assume
\begin{align}\label{Meq1.6}
\nonumber&(f_1)\ f(t)\ \text{is an odd function and}\ f(t) = o(t^{1 + \tilde{\kappa}})\ \text{as}\ t\to 0^+,\ \text{where}\ \tilde{\kappa} = \frac{2s + 2\kappa}{N-2s -\tilde{\nu}}>0\\
 \nonumber&\qquad\text{with}\ \tilde{\nu},\kappa>0\ \text{are  small parameters}.\\
&(f_2)\ \lim_{t\to\infty}\frac{f(t)}{t^p} = 0\  \text{for some}\ 1<p<2^*_{s} - 1.\\
\nonumber&(f_3)\ \text{There exists}\ 2<\theta \leq p + 1\ \text{such that}\
0\leq\theta F(t) < f(t)t\ \text{for all}\ t>0,\
\text{where}\\
\nonumber&\qquad F(t) = \ds\int_{0}^tf(\alpha)d\alpha.\\
\nonumber&(f_4)\ \text{The map}\ t\mapsto \frac{f(t)}{t}\ \text{is increasing on}\ (0,+\infty).
\end{align}

A typical case of $f(t)$ is:\,$f(t)=|t|^{p-2}t$ with $2+\frac{2s}{N-2s}<p<2^*_s$.

For the potential term $V$, we assume that $V\in C\big(\mathbb{R}^N,[0,\infty)\big)$ and

$(\textbf{V})$  There exist open bounded sets $\Lambda_i\subset\subset S_i\subset\subset U_i$ with smooth boundaries, such that
\begin{equation}\label{{Aeq1.3}}
0 < \lambda_i = \inf_{\Lambda_i}V < \inf_{U_i\backslash\Lambda_i}V,\ \overline{U_i}\cap \overline{U_j} = \emptyset\ \text{if}\ 1\leq i\neq j\leq k.
\end{equation}
Denote $\Lambda = \bigcup_{i = 1}^k\Lambda_i$, $S = \bigcup_{i = 1}^kS_i$
and $U = \bigcup_{i = 1}^kU_i$. Without loss of generality, we assume that $0\in \Lambda$.

\begin{theorem}\label{th1.1}
Let $N > 2s$, $s\in(0,1)$,  $V$ satisfy (\textbf{V}) and $f$ satisfy the assumptions $(f_1)-(f_4)$. Then problem \eqref{eq1.1} has a  positive solution $u_{\varepsilon}\in \mathcal{D}^s_{V,\varepsilon}(\mathbb{R}^N)$ if $\varepsilon>0$ is small enough. Moreover, there exists $k$ families of points $\{\{x^i_{\varepsilon}\}:1\le i\le k\}$ and an $\alpha$ close to $N - 2s$, such that
\begin{align*}
  &(i)\ \ {\lim\limits_{\varepsilon\to 0}V(x^i_{\varepsilon})} = \lambda_i,\\
  &(ii)\ \ \liminf_{\varepsilon\to 0}\|u_{\varepsilon}\|_{L^{\infty}\big(B_{\varepsilon\rho}(x^i_{\varepsilon})\big)}>0\\
  &(iii)\ \ u_{\varepsilon}(x)\le \sum_{i=1}^k\frac{C\varepsilon^{\alpha}}{\varepsilon^{\alpha} + |x - x^i_{\varepsilon}|^{\alpha}},
\end{align*}
where $C$ and $\rho$ are positive constants.
\end{theorem}

Now we introduce the main idea of the proof. For the local case $s=1$, certain penalized functional like
\begin{equation}\label{eq1.2}
K_{\varepsilon}(u) = M_1\sum_{j = 1}^k\Big(\big((L^j_{\varepsilon}(u))^{1/2}_+ - \varepsilon^{N/2}(c_j + \sigma_j)^{1/2}\big)_+\Big)^2
\end{equation}
was usually  employed  to prove that the penalized solution $u_{\varepsilon}$ has exactly  one peak in each $\Lambda_i$, see \cite{BDP,MM3,CN} for example.  But, the key step of this argument is to eliminate the effect of
$K_{\varepsilon}(u)$ to the equation, which   needs  a type of  isolated property of the least energy of $-\Delta u + u = g(u)$. However, for our case $0<s<1$, this type of isolated property is still unknown. To overcome this difficulty, we use the method developed by Byeon and Jeanjean in \cite{J.B-L.J-DCDS-2007}, which proves the existence of multi-peak solutions of  following equation
\begin{equation}\label{Meq1.4}
-\varepsilon^2\Delta u + V(x) u = g(u)
\end{equation}
by using only the compactness of the set consisting of the radial positive least energy solutions of the following limiting problem of \eqref{Meq1.4}:
\begin{equation*}
-\Delta u + a u = g(u),
\end{equation*}
where $a>0$ is a constant and $g$ is a nonlinear term satisfying some subcritical conditions. {For more application of this methods,
see \cite{T.Hu-W.Shuai-JDE-2018}.} Roughly speaking, by the compact property, we use the deformation ideas of Lemma 2.2 in \cite{MW} to construct a $(PS)_c$ sequence near the least energy solutions of the following $k$ problems:
$$
    (-\Delta)^s u + \lambda_i u = f(u),  \text{in}\ \R^N,\,\,i=1,\cdots,k.
$$

It is worth mentioning that the compact property   can be obtained by the decay estimates of positive radial least energy solutions(see Proposition \ref{Mpr2.5} below). However, the vanishing of $V$ and the nonlocal effect of $(-\Delta)^s$ makes the construction of multi-peak solutions more difficult than the classical case $s = 1$, the {non-vanishing case(\cite{J.B-L.J-DCDS-2007})} and the single peak case \cite{ADP-AA-2021}. Firstly,
an elementary(but tedious) calculations show that when $V(x)$ vanishes faster than $|x|^{-2s}$, the natural functional $I_{\varepsilon}:\mathcal{D}^s_{V,\varepsilon}(\mathbb{R}^N)\to \mathbb{R}$ corresponding to \eqref{eq1.1} defined as
$$
I_{\varepsilon}(u) = \frac{1}{2}\int_{\mathbb{R}^N}(\varepsilon^{2s}|(-\Delta)^{s/2}u|^2 + Vu^2)dx - \int_{\mathbb{R}^N}F(u)dx,
$$
whose critical points are solutions of equation \eqref{eq1.1}, is not well-defined in $\mathcal{D}^s_{V,\varepsilon}(\mathbb{R}^N)$, where $F(t)=\int_{0}^tf(s)ds$. Moreover, the fact that $V(x)$ may be compactly supported  makes it impossible that $V$ can  dominated the nonlinear term $|u|^{p-2}u$ like \cite{APX}.
Hence we have to introduce a different penalized idea from {\cite{J.B-L.J-DCDS-2007}} to cut-off the nonlinear term. More precisely,  we will first  use the nonlocal part $(-\Delta)^s$ to modify the problem by the following fractional Hardy inequality
\begin{equation}\label{eq1.4}
\int_{\R^N}\frac{|u(x)|^2}{|x|^{2s}}\,{\mathrm{d}}x\le C_{N,s}\|(-\Delta)^{s/2}u\|^2_2
\end{equation}
for all $u\in \dot{H}^s(\R^N)$(see \cite{FS}), and then construct a sup-solution and estimate the energy   of  multi-peak solutions.

The celebrated paper \cite{24} provides an easy way to understand the nonlocal problem (see \cite{100} for example), by which, one can convert the nonlocal problem \eqref{eq1.1} into a local problem. But we do not use this method in our paper. Indeed, if  problem \eqref{eq1.1} becomes a local problem, the vanishing of $V$ and the added variable ``$t>0$" (which comes from extending the problem into $\mathbb{R}^{N + 1}_+$, see \cite{100} for instance) will make it difficult to construct precise penalized functions.

%Finally, we want to emphasize that, since we only need the set of positive radial least energy solutions of $(-\Delta)^s u + au=|u|^{p-2}u\ \in\R^N\ (a>0)$(see \eqref{Meq2.9}) to be compact, our method also works well to problem \eqref{eq1.1} with general nonlinearity. For example, the same results like Theorem \ref{th1.1} can be obtained if $f:\mathbb{R}\to \mathbb{R}$ is assumed to satisfy the following properties(see Section \ref{s5} for more details):
%\begin{equation}\label{eq5.1}
%  \varepsilon^{2s}(-\Delta)^su + V(x)u = f(u),
%\end{equation}
%where the potential $V(x)$ is the same as before, the nonlinearity $f:\mathbb{R}\to \mathbb{R}$ is assumed to satisfy the following properties:
%\begin{align}\label{Meq1.6}
%\nonumber&(f_1)\ f\ \text{is an odd function and}\ f(t) = o(t^{1 + \frac{2\sigma}{N - 2s - \tilde{\delta}}}) \text{as}\ t\to 0^+\ \text{where}\ \tilde{\delta}>0\\
%\nonumber&\qquad \text{is a small parameter}.\\
%&(f_2)\ \lim_{t\to\infty}\frac{f(t)}{t^p} = 0\  \text{for some}\ 1<p<2^*_{s} - 1.\\
%\nonumber&(f_3)\ \text{There exists}\ 2<\theta \leq p + 1\ \text{such that}
%0\leq\theta F(t) < f(t)t\ \text{for all}\ t>0\
%\text{where}\\
%\nonumber&\qquad F(t) = \ds\int_{0}^tf(\alpha)d\alpha.\\
%\nonumber&(f_4)\ \text{The map}\ t\mapsto \frac{f(t)}{t}\ \text{is increasing on}\ (0,+\infty).
%\end{align}
\vspace{0.33cm}

  The paper is organized as follows: in Section 2, we establish the penalized scheme. By using the compact property of the set consisting  of positive radial least energy solutions and the deformation idea
   in Lemma 2.2 of \cite{MW}, we construct a $(PS)_c$ sequence with $k$-peaks in $\Lambda$, and then get a penalized multi-peak solution. In Section \ref{s4}, we construct a penalized function to prove
    that the penalized solution is indeed a solution of the original equation \eqref{eq1.1}. In the Appendix we will give some tedious energy estimates caused by the nonlocal operator.
%In Section \ref{s5}, we gill give a short proof to Theorem \ref{th1.1} when the nonlinear term is replaced by $f$ that satisfies \eqref{Meq1.6}.

\section{The penalized problem}

In this section, we first  establish a penalized problem  by using the fractional Hardy inequality \eqref{eq1.4} to cut off  the nonlinear term $f$. A well-defined smooth penalized functional in $\mathcal{D}^s_{V,\varepsilon}(\R^N)$ will be obtained. Secondly, we use the compact property of set consisting  of least energy solutions and the deformation lemma \cite[Lemma 2.2]{MW} to construct a $(PS)_c$ sequence near the least energy solutions. A penalized solution with $k$ peaks for the penalized problem will be obtained by passing limit on the $(PS)_c$ sequence.

%first use the fractional Hardy inequality to modify the origin problem, from which we get a well defined  and smooth penalized functional in $\mathcal{D}^s_{V,\varepsilon}(\R^N)$. Secondly, using the compact of set of least energy solutions, we use deformation lemma to construct a $(PS)_c$ sequence near the $k$ sets of least energy solutions for the penalized functional. Finally, by passing limit on the $(PS)_c$ sequence, we get a penalized solution with multi-peak for the penalized problem.

\vspace{0.5cm}

\subsection{The Penalized Functional}

\noindent The following inequality exposes the relationship between $H^s(\mathbb{R}^N)$ and the Banach space $L^q(\mathbb{R}^N)$.
\begin{proposition}\label{pr2.1} (Fractional version of the Gagliardo$-$Nirenberg inequality)(\cite{M.I.-Weinstein-CPDE-1987})
For every $u\in H^s(\mathbb{R}^N)$,
$$
\|u\|_{q} \leq C \|(-\Delta)^{s/2}u\|^{\beta}_{2}\|u\|^{1 - \beta}_{2},
$$
where $q\in [2,2^*_s]$ and $\beta$ satisfies $\frac{\beta}{2^*_s} + \frac{(1 - \beta)}{2} = \frac{1}{q}$.
\end{proposition}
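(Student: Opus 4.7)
The plan is to prove this interpolation inequality by combining the fractional Sobolev embedding with a straightforward Hölder interpolation between $L^2$ and $L^{2^*_s}$.

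First I would invoke the fractional Sobolev embedding $H^s(\mathbb{R}^N)\hookrightarrow L^{2^*_s}(\mathbb{R}^N)$ valid for $N>2s$, which provides a constant $C_1$ such that
$$
\|u\|_{2^*_s}\leq C_1\,\|(-\Delta)^{s/2}u\|_2\qquad\text{for all }u\in H^s(\mathbb{R}^N).
$$
This is the endpoint case $q=2^*_s$, corresponding to $\beta=1$, and it can be taken from the standard theory of fractional Sobolev spaces (e.g.\ from the definitions and embedding results that the paper already cites from \cite{4}). The opposite endpoint $q=2$ (i.e.\ $\beta=0$) is the trivial identity $\|u\|_2=\|u\|_2$.

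Next I would fix $q\in(2,2^*_s)$ and determine the unique $\beta\in(0,1)$ from the relation $\tfrac{1}{q}=\tfrac{\beta}{2^*_s}+\tfrac{1-\beta}{2}$. The key step is to write $|u|^q=|u|^{q\beta}\cdot|u|^{q(1-\beta)}$ and apply H\"older's inequality with conjugate exponents $p_1=\tfrac{2^*_s}{q\beta}$ and $p_2=\tfrac{2}{q(1-\beta)}$; the identity defining $\beta$ is exactly the statement that $\tfrac{1}{p_1}+\tfrac{1}{p_2}=1$. This yields
$$
\int_{\mathbb{R}^N}|u|^q\,dx\leq\Bigl(\int_{\mathbb{R}^N}|u|^{2^*_s}dx\Bigr)^{q\beta/2^*_s}\Bigl(\int_{\mathbb{R}^N}|u|^{2}dx\Bigr)^{q(1-\beta)/2},
$$
and taking $q$-th roots gives $\|u\|_q\leq\|u\|_{2^*_s}^{\beta}\|u\|_2^{1-\beta}$.

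Combining this Hölder estimate with the Sobolev embedding of the first step then gives
$$
\|u\|_q\leq C_1^{\beta}\,\|(-\Delta)^{s/2}u\|_2^{\beta}\,\|u\|_2^{1-\beta},
$$
which is the claimed inequality with $C=C_1^{\beta}$ (and $C$ may be taken uniform in $q\in[2,2^*_s]$ since $\beta\in[0,1]$). There is essentially no obstacle here: the only nontrivial input is the fractional Sobolev embedding, which is a black-box result in this nonlocal setting; once that is granted, the remainder is a one-line H\"older interpolation and the choice of exponents is forced by the definition of $\beta$.
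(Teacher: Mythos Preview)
Your argument is correct: the fractional Sobolev embedding gives the endpoint $q=2^*_s$, the case $q=2$ is trivial, and H\"older interpolation between these two yields the general case with the stated exponent $\beta$. This is the standard route to the fractional Gagliardo--Nirenberg inequality.

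Note, however, that the paper does not supply its own proof of this proposition: it is stated as a known result (the surrounding discussion and Remark~\ref{re2.2} point to \cite{4} for the underlying Sobolev embedding and compactness facts), so there is no ``paper's proof'' to compare against. Your write-up is exactly the argument one would expect to find behind such a citation.
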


The above inequality implies that $ H^s(\mathbb{R}^N)$ is continuously  embedded into $L^q(\mathbb{R}^N)$ for $\ q\in [2,2^*_s]$. Moreover, on bounded set, the embedding is compact ( see \cite{4}), i.e.,
$$
H^s(\mathbb{R}^N)\subset\subset L^q_{loc}(\mathbb{R}^N)\ \text{compactly, if}\ q\in [1,2^*_s).
$$

\subsection{The penalized functional}

Now we are going to modify the original problem \eqref{eq1.1}.
According to the fractional Hardy inequality \eqref{eq1.4}, we choose a family of  penalized potentials $\mathcal{P}_{\varepsilon}\in L^{\infty}(\mathbb{R}^N,[0,\infty))$ for $\varepsilon > 0$ small in such a way that
\begin{eqnarray}\label{eq2.1}
\begin{split}
\left\{
  \begin{array}{ll}
    \mathcal{P}_{\varepsilon}(x) = 0,\ x\in\Lambda,&\\
    \lim\limits_{\varepsilon \to 0}\sup_{\mathbb{R}^N\backslash \Lambda} \mathcal{P}_{\varepsilon}(x)\varepsilon^{-(2s + 3\kappa/2)}|x|^{2s + \kappa} = 0,&
  \end{array}
\right.
\end{split}
\end{eqnarray}
%{and}
%\begin{align}
%\ \lim\limits_{\varepsilon \to 0}\sup_{\mathbb{R}^N\backslash \Lambda} \mathcal{P}_{\varepsilon}(x)\varepsilon^{-(2s + 3\kappa/2)}|x|^{2s + \kappa} = 0,
%\end{align}
where $\kappa>0$ is the same parameter in $(f_1)$. Noting that by \eqref{eq1.4}, when $\varepsilon>0$ is small enough, it holds that for any $A\subset \R^N$,
\begin{align}\label{MMMeq2.2}
\int_{A}\mathcal{P}_{\varepsilon}(x)|u|^2\le C_{N,s}\frac{\varepsilon^{2s +\frac{3\kappa}{2}}}{\inf_{x\in(\R^N\backslash\Lambda)\cap A}{|x|^{\kappa}}}\int_{\R^N}|(-\Delta)^{s/2}u|^2\ \ \text{for all}\ u\in \mathcal{D}^s_{V,\varepsilon}
\end{align}
where $C_{N,s}$ is the constant in \eqref{eq1.4}. This type of estimate plays a key role in the paper(see \eqref{Beq2.10} below for example).

Now we give the penalized problem according to the choice of $\mathcal{P}_{\varepsilon}$:
\begin{equation}\label{eq2.2}
\varepsilon^{2s}(-\Delta)^su + Vu = \chi_{\Lambda}f(s_+) + \chi_{\mathbb{R}^N\backslash\Lambda}\min\{f(s_+),\mathcal{P}_{\varepsilon}(x)s_+\}.
\end{equation}
It is easy to check that if a solution $u_{\varepsilon}$ of \eqref{eq2.2} satisfies
$$
f(u_{\varepsilon})\le \mathcal{P}_{\varepsilon}u_{\varepsilon}\ \ \text{on}\ \R^N\backslash\Lambda,
$$
then $u_{\varepsilon}$ is a solution of \eqref{eq1.1}.

%By the explicit decay rates $(i)$ of $P_{\varepsilon}$, we have:
%\begin{proposition}\label{pr2.1} (\cite[Lemma 4.2]{APX})
%The embedding $\mathcal{D}^s_{V,\varepsilon}(\mathbb{R}^N)\subset\subset L^2(\mathbb{R}^N,(P_{\varepsilon}(x) + \chi_{\Lambda}(x)))$ is compact. Moreover, for every $\delta>0$, there exits a $\varepsilon_{\delta} > 0$ such that if $0<\varepsilon<\varepsilon_{\delta}$, then
%$$
%\int_{\mathbb{R}^N}P_{\varepsilon}(x)|\varphi|^2  \leq \delta \int_{\mathbb{R}^N}\varepsilon^{2s}|(-\Delta)^{s/2} \varphi|^2 + V|\varphi|^2,
%$$
%for all $\varphi\in \mathcal{D}^s_{V,\varepsilon}(\mathbb{R}^N)$.
%\end{proposition}

    Given a penalized potential $\mathcal{P}_{\varepsilon}$ that satisfies \eqref{eq2.1},
we define the penalized nonlinearity $g_{\varepsilon}:\mathbb{R}^N\times \mathbb{R}\to \mathbb{R}$ as
$$
g_{\varepsilon}(x,s): = \chi_{\Lambda}f(s_+) + \chi_{\mathbb{R}^N\backslash\Lambda}\min\{f(s_+),\mathcal{P}_{\varepsilon}(x)s_+\}.
$$
We denote $G_{\varepsilon}(x,t) = \int_{0}^{t}g_{\varepsilon}(x,s)ds$.

Accordingly, the penalized superposition operators $\mathfrak{g}_{\varepsilon}$ and $\mathfrak{G}_{\varepsilon}$ are given by
$$
\mathfrak{g}_{\varepsilon}(u)(x) = g_{\varepsilon}(x,u(x))\ \text{and}\ \mathfrak{G}_{\varepsilon}(u)(x) = G_{\varepsilon}(x,u(x)).
$$
Following, we define the penalized functional $J_{\varepsilon}:\mathcal{D}^s_{V,\varepsilon}(\mathbb{R}^N)\to \mathbb{R}$ as
$$
J_{\varepsilon}(u) = \frac{1}{2}\int_{\mathbb{R}^N}(\varepsilon^{2s}|(-\Delta)^{s/2} u|^2 + V(x)|u|^2) - \int_{\mathbb{R}^N}\mathfrak{G}_{\varepsilon}(u).
$$

The strong assumption \eqref{eq2.1} can help to check that $J_{\varepsilon}$ is $C^1$ and satisfies (P.S.) condition.

\begin{lemma}\label{le2.4}

(1) If $2 < p< 2^*_s$ and \eqref{eq2.1} hold, then $J_{\varepsilon}\in C^1(\mathcal{D}^s_{V,\varepsilon}(\mathbb{R}^N), \mathbb{R})$ and for $u\in \mathcal{D}^s_{V,\varepsilon}(\mathbb{R}^N)$, $\varphi\in \mathcal{D}^s_{V,\varepsilon}(\mathbb{R}^N)$,
$$
\langle J_{\varepsilon}'(u),\varphi\rangle = \int_{\mathbb{R}^N}\varepsilon^{2s}(-\Delta)^{s/2}u(-\Delta)^{s/2}\varphi + Vu\varphi - \int_{\mathbb{R}^N}\mathfrak{g}_{\varepsilon}(u)\varphi .
$$

Here $\langle\cdot,\cdot\rangle$ denotes the duality product between the dual space $\mathcal{D}^s_{V,\varepsilon}(\mathbb{R}^N)'$ and the space $\mathcal{D}^s_{V,\varepsilon}(\mathbb{R}^N)$. In particular, $u\in \mathcal{D}^s_{V,\varepsilon}(\mathbb{R}^N)$ is a critical point of $J_{\varepsilon}$ if and only if $u$ is a weak solution of the penalized equation
\begin{equation}\label{BBBeq2.2}
\varepsilon^{2s}(-\Delta)^s u + Vu =\mathfrak{g}_{\varepsilon}(u).
\end{equation}

(2) ((P.S.) condition) If $2 < p <2^*_s$  and \eqref{eq2.1} holds, then $J_{\varepsilon}$ owns the mountain pass geometry and satisfies the Palais-Smale condition.
\end{lemma}

\begin{proof}
We omit the proof since it is quite similar to that in \cite[Lemma 2.4]{ADP-AA-2021}.
\end{proof}

\subsection{Construction of  solutions with $k$ peaks}

\begin{definition}\label{de2.6}
For  $a>0$, we define the value $c_a$ as
$$
c_a = \inf_{\gamma\in\Gamma_a}\max_{t\in [0,1]}L_a(\gamma(t)),
$$
where $L_{a}:H^s(\mathbb{R}^N)\to \mathbb{R}$ and $\Gamma_a$ are given by
$$
L_a(u) = \frac{1}{2}\int_{\mathbb{R}^N}\int_{\mathbb{R}^N}\frac{|u(x) - u(y)|^2}{|x - y|^{N + 2s}}dy + \frac{1}{2}\int_{\mathbb{R}^N}a|u|^2 - \int_{\mathbb{R}^N}F(u)
$$
and
\begin{equation*}
\Gamma_a : = \{\gamma\in (C[0,1],H^s(\mathbb{R}^N)):\gamma(0) = 0,\ L_a(\gamma(1)) < 0\},
\end{equation*}
where $F(t) =\int_{0}^tf(s)ds$. From \cite{S,FQT}, we know that $c_{a}$ is continuous, increasing on $a$ and can be achieved by a positive radial solution $U_a$ which satisfies the following limiting problem
$$
(-\Delta)^su + au = f(u),\,\,x\in\R^N.
$$
Moreover, there exist two positive constants $\tilde{c}_a,\ \widetilde{C}_a$  such that
\begin{equation}\label{MMMeq2.5}
\frac{\tilde{c}_a}{1+|x|^{N+2s}}\le  U_a(|x|)\le \frac{\widetilde{C}_a}{1+|x|^{N+2s}},\ \ x\in\R^N.
\end{equation}
Then, letting $S_a=\{U_a:U_a\ \text{is positive radial and achieves}\ c_a\}$, by the decay estimate \eqref{MMMeq2.5},  we have
\begin{proposition}\label{Mpr2.5}
The set $S_a$ is compact in $H^s(\R^N)$.
\end{proposition}
\begin{proof}
If $S_a$ contains finitely many elements, then it is compact. Otherwise, taking a sequence $\{U_n\}\subset S_a$, since $\{U_n\}$ is bounded in $H^s(\R^N)$, there exists a $\overline{U}\in H^s(\R^N)$ such that
$$
\left\{
  \begin{array}{ll}
    U_n\rightharpoonup\ \overline{U}\text{weakly in}\ H^s(\R^N), & \\
    U_n\to\ \overline{U}\ \text{a.e.}\ \text{in}\ \R^N, & \\
    U_n\to\ \overline{U}\ \text{strongly in}\ L^q_{loc}(\R^N),\ 1<q<2^*_s - 1. &
  \end{array}
\right.
$$
Then, by \eqref{MMMeq2.5}, we have $U_n\to\ \overline{U}\ \text{strongly in}\ L^p(\R^N)$. Obviously, $\overline{U}$ is nonnegative and satisfies
$$
(-\Delta)^s \overline{U} + a\overline{U} =  f(\overline{U}).
$$
Furthermore, by standard regularity argument(see Appendix D in \cite{20} for example), we have $\overline{U}>0$. Then, by Definition \ref{de2.6}, we have $\liminf_{n\to\infty}L_{a}(U_n)\ge L_{a}(\overline{U})\ge c_a$. Then $L_{a}(\overline{U})= c_a$, $\overline{U}\in S_a$ and
$$
\int_{\R^N}|(-\Delta)^{s/2} U_n|^2 + a|U_n|^2 \to  \int_{\R^N}|(-\Delta)^{s/2} \overline{U}|^2 + a|\overline{U}|^2
$$
as $n\to\infty$. This completes the proof.
\end{proof}
\end{definition}

From now on we define
$$
\mathcal{M}_i = \{x\in\Lambda_i:V(x)=\lambda_i\}\ \text{and}\ \mathcal{M}=\bigcup_{i=1}^k\mathcal{M}_i.
$$
Let $\eta(x)=\eta(|x|)\in C^{\infty}_c(\R^N)$ satisfy $0\le \eta\le 1$, $\eta \equiv 1$ on $\overline{B}_{\beta}(0)$ and $\eta\equiv 0$ on $\R^N\backslash B_{2\beta}(0)$, where $\beta>0$ is a small parameter satisfying $\mathcal{M}^{2\beta}\subset \Lambda$. For each {$p_i\in\mathcal{M}_i$} and $U_{\lambda_i}\in S_{\lambda_i}$ given by Definition \ref{de2.6}, we define
$$
U^{p_1,\ldots,p_k}_{\varepsilon}(x) = \sum_{i=1}^k\eta(x - p_i)U_{\lambda_i}\Big(\frac{x - p_i}{\varepsilon}\Big),\ \ x\in\R^N.
$$
We will find a solution to \eqref{BBBeq2.2}, for sufficiently small $\varepsilon>0$, near the set
$$
\mathcal{X}_{\varepsilon}=\left\{U^{p_1,\ldots,p_k}_{\varepsilon}:U_{\lambda_i}\in S_{\lambda_i},\ p_i\in\mathcal{M}_i,\ 1\le i\le k\right\}.
$$
For each $1\le i\le k$, we also define
$$
W^i_{\varepsilon}(x)=\eta(x - p_i)U_{\lambda_i}\Big(\frac{x-p_i}{\varepsilon}\Big).
$$
We have:
\begin{proposition}\label{Mpr2.6}
For each $i\in \{1,\ldots,k\}$, it holds
$$
J_{\varepsilon}\big(\sum_{j=1}^kt_jW^j_{\varepsilon})<0
$$
if $t_i>T$ for some $T\in(0,+\infty)$.
\end{proposition}
\begin{proof}
By the choice of $W^i_{\varepsilon}$, there exists a positive constant $C$ such that
\begin{align*}
J_{\varepsilon}\big(\sum_{i=1}^kt_iW^i_{\varepsilon})
&=  \sum_{{i = 1,k=1}\atop{i\ne j}}^{k}\frac{\varepsilon^{2s}}{2}\int_{\R^N\times\R^N}\frac{t_it_j(W^i_{\varepsilon}(x) - W^i_{\varepsilon}(y))(W^j_{\varepsilon}(x) - W^j_{\varepsilon}(y))}{|x - y|^{N + 2s}}dxdy\\
&\quad + \sum_{i = 1}^{k}\Big(\frac{t^2_i}{2}\|W^i_{\varepsilon}\|^2_{\varepsilon} - \int_{\R^N}F(t_iW^i_{\varepsilon})\Big)\\
&\le \sum_{i = 1}^{k}\Big(Ct^2_i\|W^i_{\varepsilon}\|^2_{\varepsilon} - \int_{\R^N}F(t_iW^i_{\varepsilon})\Big)\\
&=\varepsilon^N\sum_{i = 1}^{k}\Big(Ct^2_i\|\eta_{\varepsilon}(x)U_{\lambda_i}(x)\|^2 - \int_{\R^N}F(t_i\eta_{\varepsilon}(x)U_{\lambda_i}(x))\Big).
\end{align*}
By decomposition, we have
\begin{eqnarray}\label{MMMMeq2.5}
\begin{split}
&\quad\|\eta_{\varepsilon}(x)U_{\lambda_i}(x)\|^2\\
&=\|U_{\lambda_i}(x)\|^2 + \int_{\R^N\times\R^N}\frac{(\eta^2_{\varepsilon}(x)-1)
|U_{\lambda_i}(x)-U_{\lambda_i}(y)|^2}{|x - y|^{N+2s}}dxdy\\
&\quad+\sum_{i=1}^kt^2_i\int_{\R^N}\int_{\R^N}\frac{\eta_{\varepsilon}(x)(U_{\lambda_i}(x)-U_{\lambda_i}(y))(\eta_{\varepsilon}(x)-\eta_{\varepsilon}(y))U_{\lambda_i}(y)}{|x-y|^{N+2s}}dxdy\\
&\quad+\int_{\R^N}\int_{\R^N}\frac{(\eta_{\varepsilon}(x)-\eta_{\varepsilon}(y))^2U^2_{\lambda_i}(y)}{|x-y|^{N+2s}}dxdy.
\end{split}
\end{eqnarray}
But, arguing as done in the proof of the following \eqref{MMMeq2.26}, \eqref{MMMeq2.28} and \eqref{MMMeq2.29} in Lemma \ref{leA.2}, we know that
$$
\int_{\R^N}\int_{\R^N}\frac{(\eta_{\varepsilon}(x)-\eta_{\varepsilon}(y))^2U^2_{\lambda_i}(y)}{|x-y|^{N+2s}}dxdy=o_{\varepsilon}(1).
$$
Hence
\begin{align*}
J_{\varepsilon}\big(\sum_{i=1}^kt_iW^i_{\varepsilon})
&\le \varepsilon^N\sum_{i = 1}^{k}\Big(Ct^2_i\|U_{\lambda_i}(x)\|^2 - \int_{B_1(0)}F(t_iU_{\lambda_i}(x))\Big).
\end{align*}
Then, by the assumption on $f$ and $\max\limits_{{t>0}\atop{1\le i\le k}}\big(Ct^2_i\|U_{\lambda_i}(x)\|^2 - \int_{B_1(0)}F(t_iU_{\lambda_i}(x))\big)<+\infty$, we get the conclusion.
\end{proof}

As a result of Proposition \ref{Mpr2.6}, we know that the following definition is reasonable: for $\tau=(t_1,\ldots,t_k)\in[0,T]^k$,  let $\gamma_{\varepsilon}(\tau)=\sum_{i=1}^kt_iW^i_{\varepsilon}$ and define
$$
\mathcal{D}_{\varepsilon}=\max_{\tau\in [0,T]^k}J_{\varepsilon}\big(\gamma_{\varepsilon}(\tau)\big).
$$
We have the following estimate for $\mathcal{D}_{\varepsilon}$.
\begin{proposition}\label{Mpr2.7}
(i) $\lim\limits_{\varepsilon\to 0}\frac{\mathcal{D}_{\varepsilon}}{\varepsilon^N}=\sum_{i=1}^kc_{\lambda_i}$.

(ii) $\limsup\limits_{\varepsilon\to 0}\frac{\max_{\tau\in\partial[0,T]^k}J_{\varepsilon}\big(\gamma_{\varepsilon}(\tau)\big)}{\varepsilon^N}\le\sum_{i=1}^kc_{\lambda_i}-\min\limits_{1\le i\le k}c_{\lambda_i}$.

(iii) For each $\delta>0$, there exists $\alpha>0$ such that for sufficiently small $\varepsilon>0$,
$$
\frac{J_{\varepsilon}(\gamma_{\varepsilon}(\tau))}{\varepsilon^N}\ge \frac{\mathcal{D}_{\varepsilon}}{\varepsilon^N}- \alpha
$$
implies that $\gamma_{\varepsilon}(\tau)\in \mathcal{X}^{\frac{\delta\varepsilon^{N/2}}{2}}_{\varepsilon}$.
\end{proposition}

\begin{proof}
By the decay rates of $U_{\lambda_i}$ and the analysis of \eqref{MMMMeq2.5}, we have
\begin{align*}
J_{\varepsilon}(\gamma_{\varepsilon}(\tau))/\varepsilon^N
&=\sum_{i=1}^kL_{\lambda_i}(t_iU_{\lambda_i}) + o_{\varepsilon}(1)\\
&\quad+\sum_{1\le i\ne j\le k}\frac{t_it_j}{2}\int_{\R^N\times\R^N}|x-y|^{-N-2s}\big(\eta_{\varepsilon}(x)
U_{\lambda_i}(x) - \eta_{\varepsilon}(y)U_{\lambda_i}(y)\big)\\
&\qquad\Big(\eta(\varepsilon x + p_i-p_j)U_{\lambda_j}\big(x+\frac{p_i-p_j}{\varepsilon}\big)-\eta(\varepsilon y + p_i-p_j)U_{\lambda_j}\big(y+\frac{p_i-p_j}{\varepsilon}\big)\Big)dxdy\\
&\quad+\sum_{i=1}^k\frac{t^2_i}{2}\int_{\R^N}\big(\eta^2_{\varepsilon}(x)V(\varepsilon x + p_i)-\lambda_i\big)U^2_{\lambda_i}(x)dx\\
&\quad+\sum_{i=1}^k\int_{\R^N}\Big(F\big(t_iU_{\lambda_i}(x)) - F\big(t_i\eta_{\varepsilon}(x)U_{\lambda_i}(x)\big)\Big),
%&= \sum_{i=1}^kL_{\lambda_i}(t_iU_{\lambda_i}) + \sum_{i=1}^kt^2_io_{\varepsilon}(1) + \sum_{i=1}^k\frac{t^p_i}{p}o_{\varepsilon}(1)\\
%&\quad + \sum_{i\ne j}^k\frac{t_it_j}{2}\int_{\R^N\times\R^N}\big(\eta_{\varepsilon}(x)
%U_{\lambda_i}(x) - \eta_{\varepsilon}(y)U_{\lambda_i}(y)\big)\Big(\eta(\varepsilon x + p_i-p_j)U_{\lambda_j}\big(x+\frac{p_i-p_j}{\varepsilon}\big)\\
%&\qquad\qquad-\eta(\varepsilon y + p_i-p_j)U_{\lambda_j}\big(y+\frac{p_i-p_j}{\varepsilon}\big)\Big){|x-y|^{-N-2s}}dxdy\\
%&\quad+\sum_{i=1}^k\frac{t^2_i}{2}\int_{\R^N}\big(\eta^2_{\varepsilon}(x)V(\varepsilon x + p_i)-\lambda_i\big)U^2_{\lambda_i}(x)dx\\
%&:=\sum_{i=1}^kL_{\lambda_i}(t_iU_{\lambda_i}) + o_{\varepsilon}(1)+\sum_{i\ne k}^k\frac{t_it_j}{2}I^{ij}_{\varepsilon},
%\big(\sum_{i=1}^kt_iW^i_{\varepsilon})
%&=  \sum_{{i = 1,k=1}\atop{i\ne j}}^{k}\frac{\varepsilon^{2s}}{2}\int_{\R^N\times\R^N}\frac{t_i(W^i_{\varepsilon}(x) - W^i_{\varepsilon}(y))(t_jW^j_{\varepsilon}(x) - W^j_{\varepsilon}(y))}{|x - y|^{N + 2s}}dxdy\\
%&\quad + \sum_{i = 1}^{k}\Big(\frac{t^2_i}{2}\|W^i_{\varepsilon}\|^2_{\varepsilon} - \frac{t^p_i}{p}\|W^i_{\varepsilon}\|^p_{L^p(\R^N)}\Big)\\
%&\le \sum_{i = 1}^{k}\Big(ct^2_i\|W^i_{\varepsilon}\|^2_{\varepsilon} - \frac{t^p_i}{p}\|W^i_{\varepsilon}\|^p_{L^p(\R^N)}\Big).
\end{align*}
where $\eta_{\varepsilon}(x)=\eta(\varepsilon x)$.
Choosing $\varepsilon>0$ be small enough such that $supp \eta_{\varepsilon}\cap supp\eta_{\varepsilon}\big(\cdot+ \frac{p_i-p_j}{\varepsilon}\big)=\emptyset$,  we have
\begin{align*}
&\Big|\int_{\R^N\times\R^N}|x-y|^{-N-2s}\big(\eta_{\varepsilon}(x)
U_{\lambda_i}(x) - \eta_{\varepsilon}(y)U_{\lambda_i}(y)\big)\\
&\qquad\Big(\eta(\varepsilon x + p_i-p_j)U_{\lambda_j}\big(x+\frac{p_i-p_j}{\varepsilon}\big)-\eta(\varepsilon y + p_i-p_j)U_{\lambda_j}\big(y+\frac{p_i-p_j}{\varepsilon}\big)\Big)dxdy\Big|\\
&=2\int_{B_{\frac{2\beta}{\varepsilon}}(0)}dx\int_{B_{\frac{2\beta}{\varepsilon}}\Big(\frac{p_j-p_i}{\varepsilon}\Big)}
\frac{\eta_{\varepsilon}(x)\eta_{\varepsilon}\big(y + \frac{p_i - p_j}{\varepsilon}\big)U_{\lambda_i}(x)U_{\lambda_j}\Big(y + \frac{p_i-p_j}{\varepsilon}\Big)}{|x - y|^{N+2s}}dy\\
&\le C\Big(\frac{\min\limits_{{i\ne j}\atop{1\le i,j\le k}}(|p_i-p_j|-4\beta)}{\varepsilon}\Big)^{-N-2s}\\
&=o_{\varepsilon}(1).
\end{align*}
%Now we estimate $I^{ijl}_{\varepsilon},1\le l\le 4$.
%Since $|\eta_{\varepsilon}(y)-\eta_{\varepsilon}(x)|\le \varepsilon\sup\limits_{x\in\R^N}|\nabla \eta(x)||y-x|$, there hold
%\begin{align*}
%|I^{ij1}_{\varepsilon}|
%&\le C\varepsilon^2\int_{\R^N}U_{\lambda_i}(x)U_{\lambda_j}\big(x + \frac{p_i-p_j}{\varepsilon}\big)dx\int_{B_1(x)}\frac{1}{|x - y|^{N+2s-2}}dy\\
%&\quad + C\int_{\R^N}U_{\lambda_i}(x)U_{\lambda_j}\big(x + \frac{p_i-p_j}{\varepsilon}\big)dx\int_{B^c_1(x)}\frac{1}{|x - y|^{N+2s}}dy\\
%&=o_{\varepsilon}(1).
%\end{align*}
%Similarly, it holds $|I^{ij4}_{\varepsilon}|\le o_{\varepsilon}(1)$. By the same estimates in Appendix, we conclude that $|I^{ij2}_{\varepsilon}|\le o_{\varepsilon}(1)$.
Then by the fact that $p_i\in\mathcal{M}_i$ and $t_i\le T$, $1\le i\le k$, we have
\begin{align}\label{Meq2.4}
\frac{J_{\varepsilon}(\gamma_{\varepsilon}(\tau))}{\varepsilon^N}
&=\sum_{i=1}^kL_{\lambda_i}(t_iU_{\lambda_i})+o_{\varepsilon}(1).
\end{align}
Hence we get (i) and obviously (ii) is true.

Finally, %let $\tau_{\varepsilon}$ satisfy $\mathcal{D}_{\varepsilon}= J_{\varepsilon}(\gamma_{\varepsilon}(\tau_{\varepsilon}))=\max_{\tau\in[0,T]^k}J_{\varepsilon}
%(\gamma_{\varepsilon}(\tau))$.
 \eqref{Meq2.4} implies that if $\tau_{\varepsilon}\in[0,T]^k$ satisfies $\lim\limits_{\varepsilon\to 0}\Big(\frac{J_{\varepsilon}\big(\gamma_{\varepsilon}(\tau_{\varepsilon})\big)}{\varepsilon^N} - \frac{\mathcal{D}_{\varepsilon}}{\varepsilon^N}\Big)=0$, then it must hold
$$
\lim_{\varepsilon\to 0}\tau_{\varepsilon}= (1,\ldots,1),
$$
which implies  (iii).

Consequently,  we complete the proof.
\end{proof}

Next, we define
$$
\mathcal{C}_{\varepsilon}=\inf_{\psi\in\Psi_{\varepsilon}}\max_{\tau\in[0,T]^k}
J_{\varepsilon}(\psi(\tau)),
$$
where
\begin{align}\label{Meq2.5}
\Psi_{\varepsilon}:=\big\{\psi_{\varepsilon}\in C\big(([0,T]^k,\mathcal{D}^s_{V,\varepsilon}(\mathbb{R}^N)\cap \mathcal{X}^{\nu\varepsilon^{N/2}}_{\varepsilon}\big)&|\psi_{\varepsilon}(\tau)=\gamma_{\varepsilon}(\tau)\ \text{for}\ \tau\in\partial[0,T]^k\big\},
\end{align}
where $\nu>0$ is large positive constant.
Obviously, $\Psi_{\varepsilon}$ is nonempty since $\gamma_{\varepsilon}\in \Psi_{\varepsilon}$. We now prove the following property of $\mathcal{C}_{\varepsilon}$.

\begin{lemma}\label{le2.9}
$$
\lim\limits_{\varepsilon\to 0} \frac{\mathcal{C}_{\varepsilon}}{\varepsilon^N} = \sum_{j = 1}^kc_{\lambda_j}.
$$
\end{lemma}
The proof  will rely on the following lemma, whose proof, for the sake of continuity, is postponed to the appendix. We define for every $i\in\{1,\ldots,k\}$, the functional $J^i_{\varepsilon}:W^{s,2}(S_i)\to \mathbb{R}$ as
$$
J^i_{\varepsilon}(u) = \frac{\varepsilon^{2s}}{2}\int_{S_i}\int_{S_i}\frac{|u(x) - u(y)|^2}{|x - y|^{N + 2s}}dy + \frac{1}{2}\int_{S_i}V(x)|u|^2
- \int_{S_i}\mathfrak{G}_{\varepsilon}(u) .
$$
We have
\begin{lemma}\label{le2.8}
The mountain pass value
$$
c^i_{\varepsilon} := \inf_{\gamma^i_{\varepsilon}\in\Gamma^i_{\varepsilon}}\max_{t\in [0,1]}J^i_{\varepsilon}(\gamma^i_{\varepsilon}(t)),\,\,i\in\{1,\ldots,k\}
$$
can be achieved, where
\begin{equation*}
\Gamma^i_{\varepsilon} : = \{\gamma^i_{\varepsilon}\in (C[0,1],W^{s,2}(S_i)):\gamma^i_{\varepsilon}(0) = 0,\ J^i_{\varepsilon}(\gamma^i_{\varepsilon}(1)) < 0\}.
\end{equation*}
Moreover,
\begin{equation}\label{eq2.3}
\lim_{\varepsilon\to 0}\frac{c^i_{\varepsilon}}{\varepsilon^N} = c_{\lambda_i}.
\end{equation}
\end{lemma}

Now we prove Lemma \ref{le2.9}:
\begin{proof}[\textbf{Proof of Lemma \ref{le2.9}}]
By Proposition \eqref{Mpr2.7}, we have the upper bounds
\begin{equation*}
   \limsup_{\varepsilon\to 0}\frac{\mathcal{C}_{\varepsilon}}{\varepsilon^N}\leq \sum_{j = 1}^kc_{\lambda_j}.
\end{equation*}

It remains to prove the lower estimate, i.e.,
\begin{equation*}\label{eq2.13}
  \liminf_{\varepsilon\to 0}\frac{\mathcal{C}_{\varepsilon}}{\varepsilon^N}\geq \sum_{j = 1}^kc_{\lambda_j}.
\end{equation*}
We first observe that given any $\psi_{\varepsilon}\in \Psi_{\varepsilon}$ and any continuous curve $c:[0,1]\to [0,T]^k$ with $c(0)\in \{0\}\times[0,T]^{k - 1}$ and $c(1)\in \{T\}\times [0,T]^{k - 1}$, we have $\gamma^1_{\varepsilon} = \psi_{\varepsilon} \circ c|_{S_1}\in \Gamma^1_{\varepsilon}$. In fact, by the definition of $\Psi_{\varepsilon}$, we have
$$
\gamma^1_{\varepsilon}(0) = 0,\ J^1_{\varepsilon}(\gamma^1_{\varepsilon}(1)) \le J_{\varepsilon}(TW^1_{\varepsilon}+0\cdot\sum_{i=2}^kW^i_{\varepsilon})  < 0.
$$
Lemma \ref{le2.8} implies that
$$
\sup_{t\in[0,1]}J^1_{\varepsilon}(\gamma^1_{\varepsilon}(t)) \geq \varepsilon^N (c_{\lambda_1} + o_{\varepsilon}(1)).
$$
Similarly, for every $\gamma^j_{\varepsilon} = \gamma \circ c|_{S_j} $ belongs to $\Gamma^j_{\varepsilon}$, where $c$ is  arbitrary continuous path which joint $[0,T]^{j - 1}\times\{0\}\times[0,T]^{k - j}$ with $[0,T]^{j - 1}\times\{T\}\times[0,T]^{k - j}$, it holds
$$
\sup_{t\in[0,1]}J^j_{\varepsilon}(\gamma^j_{\varepsilon}(t)) \geq \varepsilon^N (c_{\lambda_j} + o_{\varepsilon}(1)).
$$
Thus we can repeat the argument of Coti-Zetali and Rabinowitz in  \cite{CP} to prove, for every path $\psi_{\varepsilon}\in \Gamma$, the existence of a point $\hat{\tau}\in [0,1]^k$ satisfying
$$
J^i_{\varepsilon}(\psi_{\varepsilon}(\hat{\tau}))\geq \varepsilon^N(c_{\lambda_i} + o_{\varepsilon}(1))\ \text{for}\ j = 1,\ldots,k.
$$
Consequently, by \eqref{eq2.1}, \eqref{MMMeq2.2} and the fact that $\psi_{\varepsilon}(\tau)\in \mathcal{X}^{\nu\varepsilon^{N/2}}_{\varepsilon}$, we get
\begin{eqnarray}\label{Beq2.10}
\begin{split}
&\quad\liminf_{\varepsilon\to 0}\frac{1}{\varepsilon^{N}}\sup_{\tau\in[0,1]^k}J_{\varepsilon}(\psi_{\varepsilon}(\tau))\\
&\ge \liminf_{\varepsilon\to 0}\frac{1}{\varepsilon^{N}}J_{\varepsilon}(\psi_{\varepsilon}(\hat{\tau}))\\
& \geq \liminf_{\varepsilon\to 0}\frac{1}{\varepsilon^N}
\Big(\sum_{i = 1}^kJ^i_{\varepsilon}(\psi_{\varepsilon}(\hat{\tau})) - \varepsilon^{\kappa+2s}\int_{\mathbb{R}^N}|(-\Delta)^{s/2}\psi_{\varepsilon}({\hat{\tau}})|^2dx\Big)\\
&\geq \sum_{i =1}^kc_{\lambda_i},
\end{split}
\end{eqnarray}
which is exactly the required lower estimate.
\end{proof}

Next, we are going to construct a penalized solution for the penalized problem \eqref{eq2.2}. We first prove that the limit of a $(PS)_c$ sequence near the  set $\mathcal{X}_{\varepsilon}$ must own $k$-peaks.
\begin{proposition}\label{Mpr2.10}
Let $\{\varepsilon_j\}_j$  with $\lim\limits_{j\to\infty}\varepsilon_j = 0$ and $\{u_{\varepsilon_j}\}\subset \mathcal{X}^{d\varepsilon^{N/2}_j}_{\varepsilon_j}$ satisfy
$$
\lim_{j\to\infty}\frac{J_{\varepsilon_j}(u_{\varepsilon_j})}{\varepsilon^N_j}\le \sum_{i=1}^kc_{\lambda_i},\,\,\lim_{j\to\infty}\frac{\|J'_{\varepsilon_j}(u_{\varepsilon_j})\|}{\varepsilon^{N/2}_j}=0.
$$
Then for sufficiently small $d>0$, there exist, up to subsequence, $\{x^i_j\}_j\subset\R^3$, $i= 1,\ldots,k$, $x_i\in \mathcal{M}_i$, $\overline{U}_{\lambda_i}\in S_{\lambda_i}$ such that
\begin{equation}\label{Meq2.8}
\lim_{j\to\infty}{x^i_j}=x_i
\end{equation}
and
\begin{equation}\label{Meq2.9}
\lim_{j\to\infty}\Big\|u_{\varepsilon_j}(\cdot) - \sum_{i = 1}^k\eta(\cdot-x^i_j)\overline{U}_{\lambda_i}\Big(\frac{\cdot-x^i_j}{\varepsilon_j}\Big)\Big\|_{{\mathcal{D}}^s_{V,\varepsilon_j}}/\varepsilon^{N/2}_j = 0.
\end{equation}

\end{proposition}

\begin{proof}

For the sake of convenience, we write $\varepsilon$ for $\varepsilon_j$. Since $S_{\lambda_i}$, $i=1,\ldots,k$ are compact in $H^s(\R^N)$, there exist ${U}_{\lambda_i}\in S_{\lambda_i}$ and $p^i_{\varepsilon}\in \mathcal{M}_i$ such that
$$
\Big\|u_{\varepsilon}(x) - \sum_{i=1}^k\eta(x-p^i_{\varepsilon})U_{\lambda_i}\Big(\frac{x-p^i_{\varepsilon}}{\varepsilon}\Big)\Big\|_{\mathcal{D}^s_{V,\varepsilon}}
\le 2d\varepsilon^{N/2}.
$$
Letting $R_0\ge 1$ be a fixed positive constant and $\varepsilon R_0\le \beta$, for each $i=1,\ldots,k$, we have
%$$
%\inf_{p_i\in\mathcal{M}_i}\int_{B_{R_0}}|u_{\varepsilon}(\varepsilon x+p_i)-U_{\lambda_i}(x)|^2\le \frac{d^2}{\lambda_i},
%$$
%from which we deduce that there exists a $p^i_{\varepsilon}\in\mathcal{M}_i$ such that
$$
\int_{B_{R_0}}|u_{\varepsilon}(\varepsilon x+p^i_{\varepsilon})-U_{\lambda_i}(x)|^2\le \frac{4d^2}{\lambda_i}.
$$
As a result, we can let  $d>0$ be small enough so that
\begin{equation}\label{Meq2.8}
\liminf\limits_{\varepsilon\to0}\int_{B_{R_0}}|u_{\varepsilon}(\varepsilon x+p^i_{\varepsilon})|^2>0\ \text{and}\ \liminf_{\varepsilon\to 0}\|u_{\varepsilon}\|_{L^{\infty}(B_{\varepsilon R_0}(p^i_{\varepsilon}))}>0,
\end{equation}
for all $1\le i\le k$.
%In the following, for any function $f:\R^N\to\R$, we define $f^i_{\varepsilon}(\cdot) :=f_{\varepsilon}(\varepsilon\cdot+p^i_{\varepsilon})$.

Denote $u^{1,i}_{\varepsilon}(x)=\eta(x-p^i_{\varepsilon})u_{\varepsilon}(x)$, $u^1_{\varepsilon}(x)=\sum_{i=1}^ku^{1,i}_{\varepsilon}(x)$ and $u^2_{\varepsilon}(x)=u_{\varepsilon}(x)-u^1_{\varepsilon}(x)$. Denote $v^{1,i}_{\varepsilon}(x) = u^{1,i}_{\varepsilon}(\varepsilon x + p^i_{\varepsilon})$ and $v^{2,i}_{\varepsilon}(x) = v^i_{\varepsilon}(x) - v^{1,i}_{\varepsilon}(x)$, where $v^i_{\varepsilon}(x) = u_{\varepsilon}(\varepsilon x + p^i_{\varepsilon})$.
Fix arbitrarily an  $i\in\{1,\ldots,k\}$. Obviously, by assumption, for each $\varphi\in C^{\infty}_c(\R^N)$ and $\varepsilon$ small enough, testing $J'_{\varepsilon}(u_{\varepsilon})$ with $\varphi\Big(\frac{x-p^i_{\varepsilon}}{\varepsilon}\Big)$, we find
\begin{equation}
o_{\varepsilon}(1)=\int_{\R^N}\big((-\Delta)^sv^{1,i}_{\varepsilon}\big)\varphi + V^i_{\varepsilon}(x)v^{1,i}_{\varepsilon}\varphi-g_{\varepsilon}(\varepsilon x + p^i_{\varepsilon},v^{1,i}_{\varepsilon})\varphi+ \int_{\R^N}\big((-\Delta)^sv^{2,i}_{\varepsilon}\big)\varphi.
\end{equation}
Since $\{u_{\varepsilon}\}\subset \mathcal{X}^{d\varepsilon^{N/2}}_{\varepsilon}$, by fractional Hardy inequality \eqref{eq1.4}, we have
\begin{align}\label{MMMeq2.14}
\nonumber &\quad\Big|\int_{\R^N}\big((-\Delta)^sv^{2,i}_{\varepsilon}\big)\varphi\Big|\\
\nonumber&
= \Big|\int_{supp\varphi}dx\int_{B^c_{{\beta}/{\varepsilon}}(0)}\frac{\varphi(x)v^{2,i}_{\varepsilon}(y)}{|x - y|^{N + 2s}}dy\Big|\\
\nonumber&\le \int_{supp\varphi}(\varphi(x))^2dx\int_{B^c_{{\beta}/{\varepsilon}}(0)}\frac{1}{|x - y|^{N + 2s}}dy\\
\nonumber&\quad + \int_{supp\varphi}dx\int_{B^c_{{\beta}/{\varepsilon}}(0)}\frac{(v^{2,i}_{\varepsilon}(y))^2}{|x - y|^{N + 2s}}dy\\
\nonumber&= o_{\varepsilon}(1) + \int_{supp\varphi}dx\int_{B^c_{{\beta}/{\varepsilon}}(0)}\frac{(v^{2,i}_{\varepsilon}(y))^2}{|y|^{2s}}\frac{|y|^{2s}}{|x - y|^{N + 2s}}dy\\
&=o_{\varepsilon}(1).
\end{align}
Then, since $\{v^{1,i}_{\varepsilon}\}$ is bounded in $H^s(\R^N)$, by the Liouville type Theorem 3.3 of \cite{APX}, we have
\begin{equation}\label{MMMeq2.13}
(-\Delta)^sv^{1,i}_* + V(p^i_*)v^{1,i}_*=f\big((v^{1,i}_*)_+\big)\ \ \text{in}\ \R^N,
\end{equation}
where $v^{1,i}_*$ is the weak limit of some subsequence of $v^{1,i}_{\varepsilon}$ in $H^s(\R^N)$ and $p^i_*\in\mathcal{M}_i$ is limit of $p^i_{\varepsilon}$.
Consequently, according to  the argument of Proposition 3.4 in \cite{ADP-AA-2021}, we have for every $R>0$ that
\begin{align}\label{Meq2.14}
\nonumber
&\quad\liminf_{\varepsilon\to 0}\frac{J_{\varepsilon}(u_{\varepsilon})}{\varepsilon^N}\\
\nonumber&=o_{R}(1) + \liminf_{\varepsilon\to 0}\sum_{i=1}^k\int_{B_{\varepsilon R}(p^i_{\varepsilon})}\Big(\frac{1}{2}(|(-\Delta)^{s/2}u_{\varepsilon}|^2 + V(x)|u_{\varepsilon}(x)|^2) - \mathfrak{G}_{\varepsilon}(u_{\varepsilon})\Big)/\varepsilon^N\\
&\ge \sum_{i=1}^k L_{V(p^i_*)}(v^{1,i}_*)+o_R(1)\ge \sum_{i=1}^k c_{\lambda_i}+ o_R(1).
\end{align}
%Moreover, by the same argument of Lemma 3.4 in \cite{ADP-AA-2021}, we have
Consequently, by  Lemma \ref{le2.8}, we have $\lambda_i=V(p^i_*)$, $p^i_*\in\mathcal{M}_i$ and $v^{1,i}_*(\cdot+z_i)\in S_{\lambda_i}$ for some $z_i\in\R^N$.
Denote
$$
v^{1,i}_*(\cdot + z_i)= \overline{U}_{\lambda_i}.
$$

In the following we show that
\begin{equation}\label{MMeq2.15}
v^{1,i}_{\varepsilon}(\cdot)\to \overline{U}_{\lambda_i}(\cdot-z_i)\ \text{strongly in}\ H^s(\R^N).
\end{equation}
By the same argument of Lemma 3.4 in \cite{ADP-AA-2021}, we can conclude that
\begin{equation}\label{MMMeq2.17}
\lim_{{\varepsilon\to0}\atop{R\to\infty}}\|u_{\varepsilon}\|_{L^{\infty}\big(U\backslash\bigcup_{i=1}^k
B_{R\varepsilon}(p^i_{\varepsilon})\big)}=0
\end{equation}
and for any $r>0$, $y_{\varepsilon}\in\R^N$ with $\lim_{\varepsilon\to 0}\frac{|p^i_{\varepsilon}-y_{\varepsilon}|}{\varepsilon}=+\infty$, it holds
\begin{equation}\label{MMeq2.17}
\limsup_{{\varepsilon\to 0}}\int_{B_r(y_{\varepsilon})}|v^{1,i}_{\varepsilon}|^2=0.
\end{equation}
Then according to   Proposition \ref{pr2.1} and the Concentration-Compactness Lemma 1.21 of \cite{MW}, we have
\begin{equation}\label{MMeq2.18}
v^{1,i}_{\varepsilon}\to v^{1,i}_*\ \text{strongly in}\ L^{q}(\R^N),\ 2<q<2^*_s-1.
\end{equation}

By decomposition, one find
\begin{align*}
J_{\varepsilon}(u_{\varepsilon})
&=J_{\varepsilon}(u^1_{\varepsilon}) + \frac{\varepsilon^{2s}}{2}\int_{\R^N}|(-\Delta)^{s/2}u^2_{\varepsilon}|^2\\
&\quad+ \varepsilon^{2s}\int_{\R^N}dx\int_{R^N}\frac{\big(u^1_{\varepsilon}(x)-u^1_{\varepsilon}(y)\big)\big(u^2_{\varepsilon}(x)-u^2_{\varepsilon}(y)\big)}{|x-y|^{N+2s}}dy\\
&+\frac{1}{2}\int_{\R^N}V(x)|u^2_{\varepsilon}|^2 + \int_{\R^N}V(x)u^1_{\varepsilon}u^2_{\varepsilon} + \int_{\R^N}G_{\varepsilon}(u^1_{\varepsilon}) -\int_{\R^N}\mathcal{G}_{\varepsilon}(u_{\varepsilon}).
\end{align*}
But, with \eqref{MMMeq2.17} at hand, we can use the same method in the proof of \eqref{MMeq2.22}(which needs only \eqref{Meq2.22}) to show that
\begin{equation}\label{addeq2.22}
\frac{\varepsilon^{2s}}{2}\int_{\R^N}|(-\Delta)^{s/2}u^2_{\varepsilon}|^2 + \frac{1}{2}\int_{\R^N}V(x)|u^2_{\varepsilon}|^2 = \varepsilon^No_{\varepsilon}(1),
\end{equation}
which and \eqref{MMMeq2.2} imply that
\begin{align*}
J_{\varepsilon}(u_{\varepsilon})
&= J_{\varepsilon}(u^1_{\varepsilon})  + \int_{\R^N}F(u^1_{\varepsilon}) -\int_{\Lambda}F(u_{\varepsilon}) + \varepsilon^No_{\varepsilon}(1).
\end{align*}
From \eqref{MMMeq2.17}, we have
$$
\Big|\int_{\R^N}F(u^1_{\varepsilon}) -\int_{\Lambda}F(u_{\varepsilon})\Big|\le \|u_{\varepsilon}\|^{\tilde{\kappa}}_{L^{\infty}\Big(\Lambda\backslash \bigcup_{i=1}^kB_{\beta}(p^i_{\varepsilon})\Big)}\int_{\Lambda\backslash \bigcup_{i=1}^kB_{\beta}(p^i_{\varepsilon})}|u_{\varepsilon}|^2=\varepsilon^No_{\varepsilon}(1).
$$
Hence, by the analysis above, we have
\begin{align*}
J_{\varepsilon}(u_{\varepsilon})
&= J_{\varepsilon}(u^1_{\varepsilon})+\varepsilon^No_{\varepsilon}(1).
\end{align*}

Decomposing again, we find
\begin{align*}
 \frac{J_{\varepsilon}(u_{\varepsilon})}{\varepsilon^N} = \sum_{i=1}^kJ_{\varepsilon}(v^{1,i}_{\varepsilon}) +  \varepsilon^{-N} T^1_{\varepsilon}(\tilde{\eta}_{\varepsilon}) + o_{\varepsilon}(1),
\end{align*}
where
\begin{align*}
T^1_{\varepsilon}(\tilde{\eta}_{\varepsilon}):
= \varepsilon^{2s}\sum_{{1\le i\ne j\le k}}\int_{\R^N}\frac{\big(u^{1,i}_{\varepsilon}(x) - u^{1,i}_{\varepsilon}(y)\big)\big(u^{1,j}_{\varepsilon}(x) - u^{1,j}_{\varepsilon}(y)\big)}{|x-y|^{N+2s}}dy.
\end{align*}
But, it has been  proved in Appendix that
\begin{align}\label{MMMeq2.26}
T^1_{\varepsilon}(\tilde{\eta}_{\varepsilon}):
= \varepsilon^No_{\varepsilon}(1).
\end{align}
Hence, it holds
\begin{align*}
\frac{J_{\varepsilon}(u_{\varepsilon})}{\varepsilon^N}&=\sum_{i=1}^kJ_{\varepsilon}(v^{1,i}_{\varepsilon}) +o_{\varepsilon}(1).
\end{align*}
So
$$
\lim_{\varepsilon\to 0}\sum_{i=1}^kJ_{\varepsilon}(v^{1,i}_{\varepsilon})=\sum_{i=1}^kc_{\lambda_i},
$$
which combining with the analysis of \eqref{Meq2.14} yields
$$
\lim_{\varepsilon\to 0}J_{\varepsilon}(v^{1,i}_{\varepsilon})=c_{\lambda_i},\ i=1,\ldots,k.
$$
Consequently, by \eqref{MMeq2.18}, we have
\begin{align*}
\nonumber&\quad\int_{\R^N}|(-\Delta)^{s/2}U_{\lambda_i}(\cdot-z_i)|^2 + \lambda_i|U_{\lambda_i}(\cdot-z_i)|^2\\
&\ge\limsup_{\varepsilon\to 0}\int_{\R^N}|(-\Delta)^{s/2}v^{1,i}_{\varepsilon}|^2 + V^i_{\varepsilon}(x)|v^{1,i}_{\varepsilon}|^2\\
\nonumber&\ge \limsup_{\varepsilon\to 0}\int_{\R^N}|(-\Delta)^{s/2}v^{1,i}_{\varepsilon}|^2 + \lambda_i|v^{1,i}_{\varepsilon}|^2\\
\nonumber&\ge \int_{\R^N}|(-\Delta)^{s/2}U_{\lambda_i}(\cdot-z_i)|^2 + \lambda_i|U_{\lambda_i}(\cdot-z_i)|^2,
\end{align*}
which gives \eqref{MMeq2.15}.

Now from \eqref{MMeq2.15}, we have
%By Lemma 3.4 in \cite{ADP-AA-2021}, we have
%\begin{equation}
%\lim_{{\varepsilon\to0}\atop{R\to\infty}}\|u_{\varepsilon}\|_{L^{\infty}\big(U\backslash\bigcup_{i=1}^k
%B_{R\varepsilon}(p^i_{\varepsilon})\big)}=0.
%\end{equation}
%which and the choice of $\mathcal{P}_{\varepsilon}$ imply that
%$$
%\lim_{\varepsilon\to 0}\|u^{2}_{\varepsilon}\|_{\mathcal{D}^{s}_{V,\varepsilon}(\R^N)}/\varepsilon^{N/2}=0.
%$$
%Finally, we have
\begin{align*}
&\quad\varepsilon^{-N}\Big\|u_{\varepsilon} - \sum_{i=1}^k\eta(x-p^i_{\varepsilon}-\varepsilon z_i)U_{\lambda_i}\Big(\frac{x-p^i_{\varepsilon}-\varepsilon z_i}{\varepsilon}\Big)\Big\|^2_{\mathcal{D}^s_{V,\varepsilon}}\\
&\le 2\varepsilon^{-N}\Big\|\sum_{i=1}^k\eta(x-p^i_{\varepsilon}-\varepsilon z_i)\Big(u_{\varepsilon} - U_{\lambda_i}\Big(\frac{x-p^i_{\varepsilon}-\varepsilon z_i}{\varepsilon}\Big)\Big)\Big\|^2_{\mathcal{D}^s_{V,\varepsilon}}\\
&\quad+2\varepsilon^{-N}\Big\|u_{\varepsilon}-\sum_{i=1}^k\eta(x-p^i_{\varepsilon}-\varepsilon z_i)u_{\varepsilon}\Big\|^2_{\mathcal{D}^s_{V,\varepsilon}}\\
&\le 2k\varepsilon^{-N}\sum_{i=1}^k\Big\|\eta(x-p^i_{\varepsilon}-\varepsilon z_i)\Big(u_{\varepsilon} - U_{\lambda_i}\Big(\frac{x-p^i_{\varepsilon}-\varepsilon z_i}{\varepsilon}\Big)\Big)\Big\|^2_{\mathcal{D}^s_{V,\varepsilon}}\\
&\quad+2\varepsilon^{-N}\Big\|u_{\varepsilon}-\sum_{i=1}^k\eta(x-p^i_{\varepsilon}-\varepsilon z_i)u_{\varepsilon}\Big\|^2_{\mathcal{D}^s_{V,\varepsilon}}\\
&:=o_{\varepsilon}(1)+I_{\varepsilon}.\\
\end{align*}
It remains to show that
\begin{equation}\label{MMeq2.22}
I_{\varepsilon}=o_{\varepsilon}(1).
\end{equation}
By the same   blow-up analysis of lemmas 3.3 and 3.4 in \cite{ADP-AA-2021}, it holds
\begin{equation}\label{Meq2.22}
\lim_{{\varepsilon\to0}\atop{R\to\infty}}\|u_{\varepsilon}\|_{L^{\infty}\big(U\backslash\bigcup_{i=1}^k
B_{R\varepsilon}(p^i_{\varepsilon} + \varepsilon z_i)\big)}=0.
\end{equation}
Consequently, denoting $\tilde{\eta}_{\varepsilon} = 1 - \sum_{i=1}^k\eta\big(2({x-p^i_{\varepsilon}-\varepsilon z_i})\big)$ and testing $J'_{\varepsilon}(u_{\varepsilon})$ against with $\bar{\eta}_{\varepsilon}u_{\varepsilon}$, we have, for $\varepsilon>0$ small enough,
\begin{align*}
\tilde{I}_{\varepsilon}:&=\quad\frac{\varepsilon^{2s}}{2}\int_{\R^N}\tilde{\eta}_{\varepsilon}(x)dx\int_{\R^N}\frac{|u_{\varepsilon}(x) - u_{\varepsilon}(y)|^2}{|x - y|^{N + 2s}}dy + \int_{\R^N}V(x)|\tilde{\eta}_{\varepsilon}(x)||u_{\varepsilon}|^2dx\\
& \le   \int_{\R^N}\mathfrak{g}_{\varepsilon}(u_{\varepsilon})\tilde{\eta}_{\varepsilon}u_{\varepsilon} + \frac{\varepsilon^{2s}}{2}\int_{\R^N}dx\int_{\R^N}\frac{(u_{\varepsilon}(x) - u_{\varepsilon}(y))(\tilde{\eta}_{\varepsilon}(y)-\tilde{\eta}_{\varepsilon}(x))u_{\varepsilon}(y)}{|x - y|^{N + 2s}}dy\\
&\quad+o_{\varepsilon}(1)\varepsilon^{N/2}\|\tilde{\eta}_{\varepsilon}u_{\varepsilon}\|_{\mathcal{D}^s_{V,\varepsilon}}\\
&:= \int_{\R^N}\mathfrak{g}_{\varepsilon}(u_{\varepsilon})\tilde{\eta}_{\varepsilon}u_{\varepsilon} + T^2_{\varepsilon}(\tilde{\eta})+o_{\varepsilon}(1)\varepsilon^{N/2}\|\tilde{\eta}_{\varepsilon}u_{\varepsilon}\|_{\mathcal{D}^s_{V,\varepsilon}}\\
&\le \|u_{\varepsilon}\|^{\tilde{\kappa}}_{L^{\infty}\big(\Lambda\backslash \bigcup_{i=1}^kB_{R\varepsilon}(p^i_{\varepsilon} + \varepsilon z_i)\big)}\int_{\R^N}V(x)\tilde{\eta}_{\varepsilon}(x)|u_{\varepsilon}|^2dx + \int_{\R^N\backslash\Lambda}\mathcal{P}_{\varepsilon}|u_{\varepsilon}|^2\\
&\quad+ T^2_{\varepsilon}(\tilde{\eta}_{\varepsilon})+o_{\varepsilon}(1)\varepsilon^{N/2}\|\tilde{\eta}_{\varepsilon}u_{\varepsilon}\|_{\mathcal{D}^s_{V,\varepsilon}},
\end{align*}
which implies
\begin{align*}
&\tilde{I}_{\varepsilon}\le C\big(\int_{\R^N\backslash\Lambda}\mathcal{P}_{\varepsilon}|u_{\varepsilon}|^2 + T^2_{\varepsilon}(\tilde{\eta}_{\varepsilon})\big) +o_{\varepsilon}(1)\varepsilon^{N/2}\|\tilde{\eta}_{\varepsilon}u_{\varepsilon}\|_{\mathcal{D}^s_{V,\varepsilon}}.
\end{align*}
However, we have proved  in the Appendix that
\begin{equation}\label{MMMeq2.28}
\limsup_{\varepsilon\to 0}\frac{T^2_{\varepsilon}(\tilde{\eta}_{\varepsilon})}{\varepsilon^N}\le 0
\end{equation}
and
\begin{equation}\label{addeq2.27}
\|\tilde{\eta}_{\varepsilon}u_{\varepsilon}\|_{\mathcal{D}^s_{V,\varepsilon}}\le C\varepsilon^{N/2}.
\end{equation}
Hence, by the choice of $\mathcal{P}_{\varepsilon}$ and fractional Hardy inequality \eqref{eq1.4}, it holds
\begin{align}\label{MMMMeq2.28}
&\lim_{\varepsilon\to 0}\frac{\tilde{I}_{\varepsilon}}{\varepsilon^N}=0.
\end{align}
Noting the following estimate  proved in the Appendix
\begin{equation}\label{MMMeq2.29}
T^3_{\varepsilon}(\breve{\eta}_{\varepsilon})= \varepsilon^{2s} \int_{\R^N}\int_{\R^N}\frac{|\breve{\eta}_{\varepsilon}(x)u_{\varepsilon}(x) - \breve{\eta}_{\varepsilon}(y)u_{\varepsilon}(y)|^2}{|x-y|^{N+2s}}dxdy=\varepsilon^No_{\varepsilon}(1),
\end{equation}
where $\breve{\eta}_{\varepsilon}(x)=1-\sum_{i=1}^k\eta(x-p^i_{\varepsilon}-\varepsilon z_i)$, we find
\begin{align*}
I_{\varepsilon}&
\le \frac{T^3_{\varepsilon}(\breve{\eta}_{\varepsilon})}{\varepsilon^N}+\frac{\tilde{I}_{\varepsilon}}{\varepsilon^N}
=o_{\varepsilon}(1),
\end{align*}
%Then
%\begin{align*}
%&\lim_{\varepsilon\to 0}I_{\varepsilon}=0,
%\end{align*}
which is exactly  \eqref{MMeq2.22}. Letting $x^i_{\varepsilon} = p^i_{\varepsilon} + \varepsilon z_i$, we get
$$
\lim_{\varepsilon\to 0}\varepsilon^{-N}\Big\|u_{\varepsilon} - \sum_{i=1}^k\eta(x-x^i_{\varepsilon})U_{\lambda_i}\Big(\frac{x-x^i_{\varepsilon}}{\varepsilon}\Big)\Big\|^2_{\mathcal{D}^s_{V,\varepsilon}}=0.
$$
Hence we  complete the proof.

\end{proof}

\begin{proposition}\label{Mpr2.11}
For $d>0$ sufficiently small, there exist constants $\sigma>0$ and $\varepsilon_0>0$, such that
$$
\|J'_{\varepsilon}(u)\|_{\mathcal{D}^s_{V,\varepsilon}(\R^N)}\ge \varepsilon^{N/2}\sigma\ \text{for}\ J^{\mathcal{D}_{\varepsilon}}_{\varepsilon}\cap(\mathcal{X}^{d\varepsilon^{N/2}}_{\varepsilon}\backslash \mathcal{X}^{d\varepsilon^{N/2}/2}_{\varepsilon})\ \text{and}\ \varepsilon\in(0,\varepsilon_0),
$$
where $J^{\mathcal{D}_{\varepsilon}}_{\varepsilon}=\{u\in \mathcal{D}^s_{V,\varepsilon}(\R^N):J_{\varepsilon}(u)\le \mathcal{D}_{\varepsilon}\}$.
\end{proposition}

\begin{proof}
To the contrary, suppose that for small $d_1>d_2>0$, there exist $\{\varepsilon_j\}^{\infty}_{j=1}$ with $\lim\limits_{j\to\infty}\varepsilon_j = 0$ and $u_{\varepsilon_j}\in \mathcal{X}^{d_1\varepsilon^{N/2}_j}_{\varepsilon_j}\backslash \mathcal{X}^{d_2\varepsilon^{N/2}_j}_{\varepsilon_j}$ satisfying $\lim\limits_{j\to\infty}J_{\varepsilon_j}(u_{\varepsilon_j})/\varepsilon^N_j\le \sum_{i=1}^kc_{\lambda_i}$ and $\lim\limits_{j\to\infty}\frac{J'_{\varepsilon_j}(u_{\varepsilon_j})}{\varepsilon^{N/2}_j}=0$. By Proposition \ref{Mpr2.10}, there exists $\{x^i_j\}^{\infty}_{j=1}\subset \R^N$, $i=1,\ldots,k$, $x_i\in \mathcal{M}_i$, such that
$$
\lim_{j\to\infty}|x^i_j - x_i|=0\ \text{and}\ \lim_{j\to\infty}\Big\|u_{\varepsilon_j}(\cdot) - \sum_{i = 1}^k\eta(\cdot-x^i_j)U_{\lambda_i}\Big(\frac{\cdot-x^i_j}{\varepsilon_j}\Big)\Big\|_{{\mathcal{D}}^s_{V,\varepsilon_j}}/\varepsilon^{N/2}_j = 0.
$$
Hence, by the definition of $\mathcal{X}_{\varepsilon}$, we see that $\lim\limits_{j\to\infty}\text{dist}(u_{\varepsilon_j},\mathcal{X}_{\varepsilon_j})/\varepsilon^{N/2}_j=0$. This is a contradiction to $u_{\varepsilon_j}\not\in \mathcal{X}^{d_2 \varepsilon^{N/2}_j/2}_{\varepsilon_j}$.
\end{proof}

\vspace{0.5cm}

Now, we use Proposition \ref{Mpr2.11} and the Deformation Lemma 2.2 in \cite{MW} to construct a $(PS)_c$ sequence near the set $\mathcal{X}_{\varepsilon}$.

Define
$$
\mu:=\varepsilon^{-N}\inf_{u\in\mathcal{X}_{\varepsilon}}
\{\|u\|_{\varepsilon,S_i},i=1,\ldots,k\}.
$$
Fix $d_0\in(0,\frac{\mu}{2})$ such that Propositions \ref{Mpr2.10} and \ref{Mpr2.11} hold for $d\in(0,d_0]$.

\begin{proposition}\label{Mpr3.3}
For sufficiently small fixed $\varepsilon>0$, there exists a sequence $\{u_n\}^{\infty}_{n=1}\subset J^{\mathcal{D}_{\varepsilon}}_{\varepsilon}\cap\mathcal{X}^{d\varepsilon^{N/2}}_{\varepsilon}$ such that $J'_{\varepsilon}(u_n)\to 0$ as $n\to\infty$.
\end{proposition}

\begin{proof}
By Proposition \ref{Mpr2.11}, there exists a constant $\sigma\in(0,1)$, such that
$$
\|J'_{\varepsilon}(u)\|_{\mathcal{D}^s_{V,\varepsilon}(\R^N)}\ge \varepsilon^{N/2}\sigma\ \text{for}\ u\in J^{\mathcal{D}_{\varepsilon}}_{\varepsilon}\cap(\mathcal{X}^{d\varepsilon^{N/2}}_{\varepsilon}\backslash \mathcal{X}^{d\varepsilon^{N/2}/2}_{\varepsilon})\ \text{and}\ \varepsilon\in(0,\varepsilon_0).
$$

From Proposition \ref{Mpr2.7}(iii), there exist constants $\alpha>0$, $\varepsilon_1(\alpha)>0$ such that for $\varepsilon\in(0,\varepsilon_1]$ and $d\in(0,d_0]$, that
\begin{equation}\label{MMMMeq2.27}
J_{\varepsilon}(\gamma_{\varepsilon}(\tau))/\varepsilon^N\ge \mathcal{D}_{\varepsilon}/\varepsilon^N-\alpha\Rightarrow \gamma_{\varepsilon}(\tau)\in \mathcal{X}^{\varepsilon^{N/2}d/2}_{\varepsilon}.
\end{equation}
Now, set
$$
\alpha_0:=\min \{\frac{\alpha}{2},\frac{1}{8}\sigma^2d_0,\frac{\rho}{2}\},
$$
where $\rho = \min\limits_{1\le i\le k}c_{\lambda_i}$. We choose $0<\bar{\varepsilon}<\min\{\varepsilon_0,\varepsilon_1\}$ such that for $\varepsilon\in(0,\bar{\varepsilon}]$
$$
|\mathcal{D}_{\varepsilon}/\varepsilon^N-\sum_{i=1}^kc_{\lambda_i}|<\alpha_0,\ |\mathcal{C}_{\varepsilon}/\varepsilon^N-\sum_{i=1}^kc_{\lambda_i}|<\alpha_0\ \text{and}\ |\mathcal{D}_{\varepsilon}/\varepsilon^N - \mathcal{C}_{\varepsilon}/\varepsilon^N|<\alpha_0.
$$

We assume to the contrary that for some $\varepsilon\in(0,\bar{\varepsilon}]$, $d\in(0,d_0)$, there exist $\beta=\beta(\varepsilon)\in(0,1)$ such that
$$
\|J'_{\varepsilon}(u)\|/\varepsilon^{N/2}\ge \beta>0\ \text{for}\ u\in J^{\mathcal{D}_{\varepsilon}}_{\varepsilon}\cap\mathcal{X}^{d\varepsilon^{N/2}}_{\varepsilon}.
$$

By Lemma 2.2 in \cite{MW}, we can choose $g_{\varepsilon}$ be a pseudo-gradient vector field for $J'_{\varepsilon}$ on a neighbourhood $N_{\varepsilon}$ of $J^{\mathcal{D}_{\varepsilon}}_{\varepsilon}\cap \mathcal{X}^{d\varepsilon^{N/2}}_{\varepsilon}$, which satisfies
\begin{align*}
\|g_{\varepsilon}(u)\|&\le 2\min\{\varepsilon^{N/2},\|J'_{\varepsilon}(u)\|\},\\
\langle J'_{\varepsilon}(u),g_{\varepsilon}(u)\rangle&\ge \min\{\varepsilon^{N/2},\|J'_{\varepsilon}(u)\|\}\|J'_{\varepsilon}(u)\|.
\end{align*}

Let $\zeta_{\varepsilon}$ be a Lipschitz continuous function on $\mathcal{D}^s_{V,\varepsilon}(\R^N)$ such that $0\le \zeta_{\varepsilon}\le 1,\ \zeta_{\varepsilon}\equiv 1$ on $\mathcal{X}^{d\varepsilon^{N/2}}_{\varepsilon}\cap J^{\mathcal{D}_{\varepsilon}}_{\varepsilon}$ and $\zeta_{\varepsilon}\equiv 0$ on $\mathcal{D}^s_{V,\varepsilon}(\R^N)\backslash N_{\varepsilon}$. Let $\xi_{\varepsilon}$ be a Lipschitz continuous function on $\R$ such that $0\le \xi_{\varepsilon}\le 1$, $\xi_{\varepsilon}(l)\equiv 1$ if $|l-\mathcal{D}_{\varepsilon}\varepsilon^{-N}|\le \frac{\alpha}{2}$ and $\xi_{\varepsilon}(l)\equiv 0$ if $|l-\mathcal{D}_{\varepsilon}\varepsilon^{-N}|\ge \alpha$. Set
\begin{equation}\label{Meq3.2}
h_{\varepsilon}(u):=\left\{
                      \begin{array}{ll}
                        -\zeta_{\varepsilon}(u)\xi_{\varepsilon}(\varepsilon^{-N}J_{\varepsilon}(u))g_{\varepsilon}(u), & \text{if}\ u\in N_{\varepsilon} \\
                        0, & \text{if}\ u\in \mathcal{D}^s_{V,\varepsilon}\backslash N_{\varepsilon}.
                      \end{array}
                    \right.
\end{equation}
Then there exists a unique solution $\Phi_{\varepsilon}:\mathcal{D}^s_{V,\varepsilon}\times[0,+\infty)\to \mathcal{D}^s_{V,\varepsilon}$ to the following initial value problem
\begin{equation}\label{Meq3.3}
\left\{
  \begin{array}{ll}
    \frac{d}{d\theta}\Phi_{\varepsilon}(u,\theta)=
h_{\varepsilon}(\Phi_{\varepsilon}(u,\theta)), &\\
    \Phi_{\varepsilon}(u,0)=u.&
  \end{array}
\right.
\end{equation}
(See the proof of Lemma 2.3 in \cite{MW}). It can be easily check that $\Phi_{\varepsilon}$ has the following properties:
\begin{align}\label{MMeq3.4}
&\nonumber(1)\ \Phi_{\varepsilon}(u,\theta)=u\ \text{if}\ \theta=0\ \text{or}\ u\in \mathcal{D}^s_{V,\varepsilon}(\R^N)\backslash N_{\varepsilon}\ \text{or}\ |J_{\varepsilon}(u) - \ \mathcal{D}_{\varepsilon}|\ge \alpha\varepsilon^{N}.\\
&(2)\|\frac{d}{d\theta}\Phi_{\varepsilon}(u,\theta)\|\le 2\varepsilon^{N/2}.\\
\nonumber&(3)\ \frac{d}{d\theta}J_{\varepsilon}(\Phi_{\varepsilon}(u,\theta)) = \langle J'_{\varepsilon}(\Phi_{\varepsilon}(u,\theta)),h_{\varepsilon}(\Phi_{\varepsilon}(u,\theta))\le 0.
\end{align}

\vspace{0.2cm}
\noindent\textbf{Claim 1} For any $\tau\in[0,T]^k$, there exists $\theta_{\tau}\in[0,+\infty)$ such that
$$
\Phi_{\varepsilon}(\gamma_{\varepsilon}(\tau),\theta_{\tau})\in J^{\mathcal{D}_{\varepsilon}-\alpha_0\varepsilon^N}_{\varepsilon}.
$$

\textit{Proof of Claim 1.} Assume by contradiction that there exists $\tau_0\in[0,T]^k$ such that
\begin{equation}\label{Meq3.4'}
J_{\varepsilon}(\Phi_{\varepsilon}(\gamma_{\varepsilon}(\tau_0),\theta))>\mathcal{D}_{\varepsilon} - \alpha_0\varepsilon^N
\end{equation}
for all $\theta>0$. Then, by the property (3) in \eqref{MMeq3.4}, we have
\begin{equation}\label{MMeq3.5}
\mathcal{D}_{\varepsilon}-\alpha_0\varepsilon^{N}< J_{\varepsilon}(\Phi_{\varepsilon}(\gamma_{\varepsilon}(\tau_0),\theta))\le J_{\varepsilon}(\Phi_{\varepsilon}(\gamma_{\varepsilon}(\tau_0),0))=J_{\varepsilon}(\gamma_{\varepsilon}(\tau_0))\le \mathcal{D}_{\varepsilon}<\mathcal{D}_{\varepsilon}+\alpha_0\varepsilon^{N},
\end{equation}
which and the choice of $\alpha_0$ imply that $\xi_{\varepsilon}(\varepsilon^{-N}J_{\varepsilon}(\Phi_{\varepsilon}(\gamma_{\varepsilon}(\tau_0),\theta)))\equiv 1$.

If $\Phi_{\varepsilon}(\gamma_{\varepsilon}(\tau_0),\theta)\in \mathcal{X}^{d\varepsilon^{N/2}}_{\varepsilon}$ for all $\theta\ge 0$, then by \eqref{MMeq3.5}, we have $\Phi_{\varepsilon}(\gamma_{\varepsilon}(\tau_0),\theta)\in \mathcal{X}^{d\varepsilon^{N/2}}_{\varepsilon}\cap J^{\mathcal{D}_{\varepsilon}}_{\varepsilon}$ for all $\theta \ge 0$. Then $\zeta_{\varepsilon}(\Phi_{\varepsilon}(\gamma_{\varepsilon}(\tau_0),\theta))\equiv 1$ and $|\frac{d}{d\theta}J_{\varepsilon}(\Phi_{\varepsilon}(\gamma_{\varepsilon}(\tau_0),\theta))|\ge\beta^2\varepsilon^N$ for all $\theta\ge 0$. Hence
\begin{align*}
 J_{\varepsilon}(\Phi_{\varepsilon}(\gamma_{\varepsilon}(\tau_0),\frac{\alpha}{\beta^2})
\le \mathcal{D}_{\varepsilon}+\alpha_0\varepsilon^N - \varepsilon^N \int_0^{\frac{\alpha}{\beta^2}}\beta^2d\theta\le \mathcal{D}_{\varepsilon}-\alpha_0\varepsilon^N,
\end{align*}
a contradiction to \eqref{MMeq3.5}.

Assume that $\Phi_{\varepsilon}(\gamma_{\varepsilon}(\tau_0),\theta_0)\not\in \mathcal{X}^{d\varepsilon^{N/2}}_{\varepsilon}$ for some $\theta_0>0$. Note that \eqref{Meq3.4'}, \eqref{MMeq3.5} and \eqref{MMMMeq2.27} imply that $\gamma_{\varepsilon}(\tau_0)
\in \mathcal{X}^{\frac{d}{2}\varepsilon^{N/2}}_{\varepsilon}$. Then there exist $0<\theta^1_0<\theta^2_0$ such that $\Phi_{\varepsilon}(\gamma_{\varepsilon}(\tau_0),\theta^1_0)\in \partial \mathcal{X}^{\frac{d}{2}\varepsilon^{N/2}}_{\varepsilon}$, $\Phi_{\varepsilon}(\gamma_{\varepsilon}(\tau_0),\theta^2_0)\in \partial \mathcal{X}^{d\varepsilon^{N/2}}_{\varepsilon}$ and $\Phi_{\varepsilon}(\gamma_{\varepsilon}(\tau_0),\theta)\in  \mathcal{X}^{d\varepsilon^{N/2}}_{\varepsilon}\backslash\mathcal{X}^{\frac{d}{2}\varepsilon^{N/2}}_{\varepsilon}$ for all $\theta\in(\theta^1_0,\theta^2_0)$. Then by Proposition \ref{Mpr2.11}, we have $|\frac{d}{d\theta}J_{\varepsilon}(\Phi_{\varepsilon}(\gamma_{\varepsilon}(\tau_0),\theta)|\ge \sigma^2\varepsilon^{N}$ for all $\theta\in(\theta^1_0,\theta^2_0)$. By property (2) of \eqref{MMeq3.4} and mean value theorem, we have
$$
\frac{d\varepsilon^{N/2}}{2}\le \|\Phi_{\varepsilon}(\gamma_{\varepsilon}(\tau_0),\theta^1_0)-\Phi_{\varepsilon}(\gamma_{\varepsilon}(\tau_0),\theta^2_0)\|\le 2\varepsilon^{N/2}|\theta^1_0-\theta^2_0|,
$$
which implies
$$
|\theta^1_0-\theta^2_0|\ge\frac{d}{4}.
$$
Hence
\begin{align}\label{Meq3.7}
J_{\varepsilon}\big(\Phi_{\varepsilon}(\gamma_{\varepsilon}(\tau_0),\theta^2_0)\big) \nonumber&=J_{\varepsilon}\big(\Phi_{\varepsilon}(\gamma_{\varepsilon}(\tau_0),\theta^1_0)\big) + \int_{\theta^1_0}^{\theta^2_0}\frac{d}{d\theta}J_{\varepsilon}\big(\Phi_{\varepsilon}(\gamma_{\varepsilon}(\tau_0),\theta)\big)d\theta\\
\nonumber&\le \mathcal{D}_{\varepsilon}+ \alpha_0\varepsilon^N - \varepsilon^N \sigma^2|\theta^1_0-\theta^2_0|\\
\nonumber&<\mathcal{D}_{\varepsilon}+ \alpha_0\varepsilon^N - \varepsilon^N \sigma^2\frac{d}{4}\\
\nonumber&\le \mathcal{D}_{\varepsilon}+ \alpha_0\varepsilon^N - \varepsilon^N \sigma^2\frac{d_0}{4}\\
&\le \mathcal{D}_{\varepsilon} - \alpha_0\varepsilon^N,
\end{align}
which is a contradiction to \eqref{MMeq3.5}. This completes the proof of Claim 1.

By Claim 1, we can define $\theta(\tau):=\inf\{\theta\ge 0:J_{\varepsilon}\big(\Phi_{\varepsilon}(\gamma_{\varepsilon}(\tau),\theta)\big)\le \mathcal{D}_{\varepsilon} - \alpha_0\varepsilon^N\}$ and let $\bar{\gamma}_{\varepsilon}(\tau):= \Phi_{\varepsilon}(\gamma_{\varepsilon}(\tau),\theta(\tau))$. We have

\vspace{0.2cm}

\noindent\textbf{Claim 2} $\bar{\gamma}_{\varepsilon}(\tau)\in\Psi_{\varepsilon}$.

\vspace{0.15cm}
\textit{Proof of Claim 2.} Firstly, for any $\tau\in\partial[0,T]^k$, by Proposition \ref{Mpr2.7}, we have $\gamma_{\varepsilon}(\tau)\in J^{\mathcal{D}_{\varepsilon}-\alpha_0\varepsilon^N}_{\varepsilon}$. Hence $\theta(\tau)=0$ and $\bar{\gamma}_{\varepsilon}(\tau)=\gamma_{\varepsilon}(\tau)$ if $\tau\in\partial[0,T]^k$. If $J_{\varepsilon}(\gamma_{\varepsilon}(\gamma_{\varepsilon}(\tau))
\le\mathcal{D}_{\varepsilon}-\alpha_0\varepsilon^N$, then $\vartheta(\tau)=0$ and so $\bar{\gamma}_{\varepsilon}(\tau) = \gamma_{\varepsilon}(\tau)\in\mathcal{X}^{\nu\varepsilon^{N/2}}_{\varepsilon}$ for large $\nu>0$. If $J_{\varepsilon}(\gamma_{\varepsilon}(\tau))
>\mathcal{D}_{\varepsilon}-\alpha_0\varepsilon^N$, then by \eqref{MMMMeq2.27}, $\gamma_{\varepsilon}(\tau)\in \mathcal{X}^{d\varepsilon^{N/2}/2}$ and by property (3) in \eqref{MMeq3.4}
$$
\mathcal{D}_{\varepsilon}-\alpha_0\varepsilon^N < J_{\varepsilon}\big(\Phi_{\varepsilon}(\gamma_{\varepsilon}(\tau),\theta)\big)\le \mathcal{D}_{\varepsilon}<\mathcal{D}_{\varepsilon} + \alpha_0\varepsilon^N,\ \ \text{for all}\ \theta\in[0,\theta(\tau)).
$$
This implies $\xi_{\varepsilon}(\varepsilon^{-N}J_{\varepsilon}(\Phi_{\varepsilon}(\gamma_{\varepsilon}(\tau_0),\theta)))\equiv 1$ for all $\theta\in[0,\theta(\tau))$. Consequently, if $\bar{\gamma}_{\varepsilon}(\tau)=\Phi_{\varepsilon}(\gamma_{\varepsilon}(\tau),\vartheta(\tau))
\not\in \mathcal{X}^{d\varepsilon^N}_{\varepsilon}$, then by the same argument of \eqref{Meq3.7}, there exists a $\theta\in(0,\theta(\tau))$ such that
$$
J_{\varepsilon}\big(\Phi_{\varepsilon}(\gamma_{\varepsilon}(\tau),\theta)\big)<\mathcal{D}_{\varepsilon}-\alpha_0\varepsilon^N.
$$
This contradicts the definition of $\theta(\tau)$. Hence $\bar{\gamma}_{\varepsilon}(\tau)\in \mathcal{X}^{d\varepsilon^{N/2}}_{\varepsilon}\subset\mathcal{X}^{\nu\varepsilon^{N/2}}_{\varepsilon}$.

Secondly, we prove that $\bar{\gamma}_{\varepsilon}(\tau)$ is continuous. We fix any $\bar{\tau}\in[0,1]^k$. If $J_{\varepsilon}(\gamma_{\varepsilon}(\bar{\tau}))<\mathcal{D}_{\varepsilon} - \alpha_0\varepsilon^N$, then $\theta(\bar{\tau})=0$. Then by the continuity of $\gamma_{\varepsilon}$, we conclude that $\bar{\gamma}_{\varepsilon}(\tau)$ is continuous at $\bar{\tau}$. If $J_{\varepsilon}(\gamma_{\varepsilon}(\bar{\tau}))=\mathcal{D}_{\varepsilon} - \alpha_0\varepsilon^N$, then from the proof of \eqref{Meq3.7}, we know that $\gamma_{\varepsilon}(\bar{\tau})\in \mathcal{X}^{d\varepsilon^{N/2}}_{\varepsilon}$, and so
$$
\|J'_{\varepsilon}\big(\gamma_{\varepsilon}(\bar{\tau})\big)\|\ge\beta\varepsilon^{N/2}>0.
$$
Thus, from the property (3) in \eqref{MMeq3.4}, we have $J_{\varepsilon}\big(\Phi_{\varepsilon}(\gamma_{\varepsilon}(\bar{\tau}),\theta(\bar{\tau})+\omega\big)
<\mathcal{D}_{\varepsilon}-\alpha_0\varepsilon^N$.
By the continuity of $\gamma_{\varepsilon}$, we choose $r>0$ as the constants such that $J_{\varepsilon}(\Phi_{\varepsilon}(\gamma_{\varepsilon}(\tau),\theta(\bar{\tau}))\big)
<\mathcal{D}_{\varepsilon}-\alpha_0\varepsilon^N$ for all $\tau\in B_r(\bar{\tau})$. Then by the definition of $\theta(\tau)$, we have $\theta({\tau})<\theta(\bar{\tau})$ for all $\tau\in B_r(\bar{\tau})\cap[0,T]^k$, and then
$$
0\le\limsup_{\tau\to\bar{\tau}}\theta(\tau)\le\theta(\bar{\tau}).
$$
If $\theta({\bar{\tau}})=0$, we immediately have
$$
\lim_{\tau\to\bar{\tau}}\theta(\tau)=\theta(\bar{\tau}).
$$
If $\theta({\tau})>0$, then for any $0<\omega<\theta(\bar{\tau})$, similarly we have $J_{\varepsilon}(\Phi_{\varepsilon}(\gamma_{\varepsilon}(\tau),\theta(\bar{\tau})-\omega)\big)
>\mathcal{D}_{\varepsilon}-\alpha_0\varepsilon^N$. By the continuity of $\gamma_{\varepsilon}$ again, we see that
$$
\liminf_{\tau\to\bar{\tau}}\theta(\tau)\ge\theta(\bar{\tau}).
$$
So $\theta({\cdot})$ is continuous at $\bar{\tau}$. This completes the proof of Claim 2.

Now we have proved that $\bar{\gamma}_{\varepsilon}(\tau)\in \Psi_{\varepsilon}$ and $\max_{\tau\in[0,T]^k}\le \mathcal{D}_{\varepsilon}-\alpha_0\varepsilon^N$, which contradicts the definition of $\mathcal{C}_{\varepsilon}$. This completes the proof.

\end{proof}

\begin{lemma}\label{le2.11}
Let $\{u_n\}^{\infty}_{n=1}$ be the sequence given by Proposition \ref{Mpr3.3}. Then $\{u_n\}$ has a subsequence which converges to $u_{\varepsilon}$ in $\mathcal{D}^s_{V,\varepsilon}(\R^N)$. Moreover, there hold $u_{\varepsilon}>0$, $u_{\varepsilon} \in \mathcal{D}^s_{V,\varepsilon}(\mathbb{R}^N)\cap C^{1,\beta}(\mathbb{R}^N)$ for some $\beta\in (0,1)$ and $u_{\varepsilon}$ is a solution to the penalized problem \eqref{eq2.2}(or \eqref{BBBeq2.2}).
\end{lemma}

\begin{proof}
The convergence is from Lemma \ref{le2.4}. The regularity result follows from Appendix D in \cite{20}. Testing the penalized equation \eqref{BBBeq2.2} with $(u_{\varepsilon})_{-}$ and integrating, we can see that $u_{\varepsilon}\geq 0$. Suppose to the contrary that there exists  $x_0\in \mathbb{R}^N$ such that $u_{\varepsilon}(x_0) = 0$, then we have
$$
0 = \varepsilon^{2s}(-\Delta)^s{u_{\varepsilon}}(x_0) + V(x_0)u_{\varepsilon}(x_0) < 0,
$$
which is a contradiction. Therefore, $u_{\varepsilon}>0$.

\end{proof}

To end this section,  we prove that $u_{\varepsilon}$ owns $k$-peaks.
\begin{lemma}\label{Mle3.5}
 Let $\rho > 0$ and $u_{\varepsilon}$ be the solution of \eqref{eq2.2} given by Lemma \ref{le2.11}. Then there exists $k$ families of points $\{x^i_{\varepsilon}\}$, $i=1,\ldots,k$, such that
\begin{eqnarray*}
%  to remove numbering (before each equation)
  &&(1)\ \liminf_{\varepsilon\to 0}\|u_{\varepsilon}\|_{L^{\infty}(B_{\varepsilon\rho}(x^i_{\varepsilon}))}> 0,\\
  &&(2)\ \lim_{\varepsilon\to 0}dist(x^i_{\varepsilon},\mathcal{M}_i)=0,\\
  &&(3)\ \lim\limits_{{R\to \infty}\atop{{\varepsilon\to 0}}}\|u_{\varepsilon}\|_{L^{\infty}(U\backslash \cup_{1\leq i\leq k} B_{\varepsilon R}(x^i_{\varepsilon}))} =  0.
\end{eqnarray*}

\end{lemma}

\begin{proof}
The proof  is trivial by the fact that the $(PS)$ sequence given by Proposition \ref{Mpr3.3} satisfies the assumptions of Proposition \ref{Mpr2.10}.
\end{proof}

\section{Back to the original problem}\label{s4}

\noindent In this section we show that $u_{\varepsilon}$ solves the original problem \eqref{eq1.1}. For this purpose, basing on the penalized equation \eqref{BBBeq2.2}, all we need to do is to prove that
\begin{equation}\label{Meq3.1}
f(u_{\varepsilon})\leq \mathcal{P}_{\varepsilon}(x)u_{\varepsilon},\ \ x\in\R^N\backslash\Lambda.
\end{equation}
We use comparison principle to prove \eqref{Meq3.1}, for which we should first linearize the penalized equation \eqref{BBBeq2.2} outside small balls.
\begin{proposition}\label{pr4.1} Let $\{x^i_{\varepsilon}\}, i=1,\ldots,k$ be the $k$ families of points given by Lemma \ref{Mle3.5}. Then for $\varepsilon > 0$ small enough and $\delta\in (0,1)$, there exist $C_{\infty}>0$ and $R > 0$ such that
\begin{equation}\label{Meq3.2}
  \left\{
    \begin{array}{ll}
      \varepsilon^{2s}(-\Delta)^{s} u_{\varepsilon} + (1 - \delta)Vu_{\varepsilon}\leq P_{\varepsilon}u_{\varepsilon}, & \text{in}\ \mathbb{R}^N\backslash  \bigcup_{i = 1}^kB_{R\varepsilon}(x^i_{\varepsilon}), \vspace{2mm}\\
      u_{\varepsilon}\leq C_{\infty} & \text{in}\ \Lambda.
    \end{array}
  \right.
\end{equation}

\end{proposition}

\begin{proof}
 That $u_{\varepsilon}\leq C_{\infty}$ in $\Lambda$ is from Lemma \ref{Mle3.5} and the $L^{\infty}$ estimate in \cite[Appendix D]{20}. By the assumption on $f$, $\inf_{U} V(x) > 0$  and Lemma \ref{Mle3.5}, there exists $R > 0$ such that
$$
f(u_{\varepsilon})\leq \delta Vu_{\varepsilon}\ \text{in}\ U\backslash \bigcup_{i = 1}^kB_{R\varepsilon }(x^i_{\varepsilon}).
$$
Obviously
$$
\mathfrak{g}_{\varepsilon}(u_{\varepsilon})
\leq
\mathcal{P}_{\varepsilon}u_{\varepsilon}\ \text{in}\ \mathbb{R}^N\backslash U.
$$
Hence we conclude our result by inserting the previous pointwise bounds into the penalized equation \eqref{BBBeq2.2}.

\end{proof}

Next, we construct a suitable sup-solution to Eq \eqref{Meq3.2}. Some of the the details are similar to that in Proposition 4.2 of \cite{ADP-AA-2021}. Let $\tilde\eta_{\beta}(s), s\ge 0$ be a smooth non-increasing function with $\tilde{\eta}_{\beta}\equiv 1$ on $[0,1]$ and $\tilde{\eta}_{\beta}\equiv 0$ on $(1+\beta,+\infty)$, where $\beta$ is a small parameter. Define $\eta_{\beta,R}(|x|)= \tilde{\eta}_{\beta}(|x|/R)$.
Setting $0<\alpha <N-2s$ and denoting
\begin{align*}
&\quad f^{\alpha}_{\beta,R}(x)
= \eta_{\beta,R}(x)\frac{1}{R^{\alpha}} +\big(1 -
\eta_{\beta,R}(x)\big)\frac{1}{|x|^{\alpha}},\\
&\quad f^{\alpha,i}_{\beta,R,\varepsilon}(x) = f^{\alpha}_{\beta,R}\Big(\frac{x-x^i_{\varepsilon}}{\varepsilon}\Big), \\
&\quad  f^{\alpha}_{\beta,R,\varepsilon}(x)=\sum_{i=1}^kf^{\alpha,i}_{\beta,R,\varepsilon}(x).
\end{align*}
We have
\begin{proposition}\label{Mpr3.2}
Let $\varepsilon>0$ be small enough. Then for every $x\in \R^N\backslash \bigcup_{i=1}^kB_{R\varepsilon}(x^i_{\varepsilon})$, it holds
\begin{equation}\label{Meq3.3}
\varepsilon^{2s}(-\Delta)^s f^{\alpha}_{\beta,R,\varepsilon} + (1-\delta)V(x) f^{\alpha}_{\beta,R,\varepsilon} - \mathcal{P}_{\varepsilon}(x) f^{\alpha}_{\beta,R,\varepsilon}\ge 0.
\end{equation}
\end{proposition}
\begin{proof}
Fixing any $i\in\{1,\ldots,k\}$, a computation shows that
\begin{align}
\nonumber&\quad \varepsilon^{2s}(-\Delta)^s f^{\alpha,i}_{\beta,R,\varepsilon} + V(x) f^{\alpha,i}_{\beta,R,\varepsilon} - \mathcal{P}_{\varepsilon}(x) f^{\alpha,i}_{\beta,R,\varepsilon}\\
&= (-\Delta)^s f^{\alpha}_{\beta,R,\varepsilon}\Big(\frac{x-x^i_{\varepsilon}}{\varepsilon}\Big) + V(x) f^{\alpha}_{\beta,R,\varepsilon}\Big(\frac{x-x^i_{\varepsilon}}{\varepsilon}\Big) - \mathcal{P}_{\varepsilon}(x) f^{\alpha}_{\beta,R,\varepsilon}\Big(\frac{x-x^i_{\varepsilon}}{\varepsilon}\Big)\\
\nonumber &= \Big((-\Delta)^s f^{\alpha}_{\beta,R,\varepsilon}(y) + V^i_{\varepsilon}(y) f^{\alpha}_{\beta,R,\varepsilon}(y) - \widehat{\mathcal{P}}^i_{\varepsilon}(y) f^{\alpha}_{\beta,R,\varepsilon}(y)\Big)\Big|_{y=\frac{x-x^i_{\varepsilon}}{\varepsilon}},
\end{align}
where $V^i_{\varepsilon}(\cdot) = V(\varepsilon x\cdot + x^i_{\varepsilon})$ and $\widehat{\mathcal{P}}^i_{\varepsilon}(\cdot) = \mathcal{P}_{\varepsilon}(\varepsilon \cdot + x^i_{\varepsilon})$. But,  using the non-increasing property of $\eta_{\beta}$ and the computation of Proposition 4.2 of \cite{ADP-AA-2021},  for any $y\in\R^N\backslash B_R(0)$,  when $\varepsilon>0$ is small enough, we can conclude that
\begin{equation}\label{Meq3.5}
(-\Delta)^s f^{\alpha}_{\beta,R,\varepsilon}(y) + V^i_{\varepsilon}(y) f^{\alpha}_{\beta,R,\varepsilon}(y) - \widehat{\mathcal{P}}^i_{\varepsilon}(y) f^{\alpha}_{\beta,R,\varepsilon}(y)\ge 0.
\end{equation}
Then for all $x\in \R^N\backslash B_{R\varepsilon}(x^i_{\varepsilon})$, it holds
\begin{align}
\nonumber&\quad \varepsilon^{2s}(-\Delta)^s f^{\alpha,i}_{\beta,R,\varepsilon} + V(x) f^{\alpha,i}_{\beta,R,\varepsilon} - \mathcal{P}_{\varepsilon}(x) f^{\alpha,i}_{\beta,R,\varepsilon}\ge 0.
\end{align}
As a result, we have
\begin{align*}\label{Meq3.3}
&\quad \varepsilon^{2s}(-\Delta)^s f^{\alpha}_{\beta,R,\varepsilon} + V(x) f^{\alpha}_{\beta,R,\varepsilon} - \mathcal{P}_{\varepsilon}(x) f^{\alpha}_{\beta,R,\varepsilon}\\
&=\sum_{i=1}^k\Big(\varepsilon^{2s}(-\Delta)^s f^{\alpha,i}_{\beta,R,\varepsilon} + V(x) f^{\alpha,i}_{\beta,R,\varepsilon} - \mathcal{P}_{\varepsilon}(x) f^{\alpha,i}_{\beta,R,\varepsilon}\Big)\ge 0
\end{align*}
for all $x\in \R^N\backslash \bigcup_{i=1}^kB_{R\varepsilon}(x^i_{\varepsilon})$. This completes the proof.
\end{proof}

At last, we give the proof of Theorem \ref{th1.1}.

\textbf{Proof of Theorem \ref{th1.1}}.
Let
\begin{equation}\label{eq4.5}
\left\{
  \begin{array}{ll}
    %\alpha \in\Big(\frac{2s}{p - 2},N - 2s\Big),\ \kappa = \frac{\alpha(p - 2) - 2s}{4},
\mathcal{P}_{\varepsilon}(x) = \frac{\varepsilon^{2s + 2\kappa}}{|x|^{2s + \kappa}}\chi_{\R^N\backslash\Lambda}(x), & \\[5mm]
    \overline{U}_{\varepsilon}(x) = CR^{\alpha}f^{\alpha}_{\beta,R,\varepsilon}(x).
  \end{array}
\right.
\end{equation}

 It is easy to check that $\mathcal{P}_{\varepsilon}$ satisfies the assumption \eqref{eq2.1}.

 By Proposition \ref{Mpr3.2}, choosing the constant  $C>0$ large enough and letting  $v_{\varepsilon}(x) =  u_{\varepsilon}(x)-\overline{U}_{\varepsilon}(x)$, we have
\begin{equation*}
  \left\{
    \begin{array}{ll}
      \varepsilon^{2s} (-\Delta)^{s} {v}_{\varepsilon}(x) + (1-\delta)V(x){v}_{\varepsilon}(x) - \mathcal{P}_{\varepsilon}(x){v}_{\varepsilon}(x)\le 0, & \text{in}\  \R^N\backslash \bigcup_{i=1}^kB_{R\varepsilon}(x^i_{\varepsilon}), \vspace{2mm}\\[5mm]
      {v}_{\varepsilon}(x)\le 0 & \text{in}\ \bigcup_{i=1}^kB_{R\varepsilon}(x^i_{\varepsilon}).
    \end{array}
  \right.
\end{equation*}
Since ${v}^+_{\varepsilon}\in \mathcal{D}^s_{V,\varepsilon}$(when $\alpha$ is closed to $N - 2s$), testing the equation above against with ${v}^+_{\varepsilon}(x)$, by the fractional Hardy inequality in \eqref{eq1.4}, we find ${v}^+_{\varepsilon}(x) = 0,\ x\in\R^N$. Hence ${v}_{\varepsilon}(x)\le 0,\ x\in\R^N$. Especially, we have
$$
u_{\varepsilon}(x)\le \overline{U}_{\varepsilon}(x) = \sum_{i=1}^kf^{\alpha,i}_{\beta,R,\varepsilon}(x)\le \sum_{i=1}^k\frac{C\varepsilon^{\alpha}}{\varepsilon^{\alpha} + |x-x^i_{\varepsilon}|^{\alpha}},\ \ x\in\R^N.
$$
Moreover,  letting $\alpha$ be closed to $N-2s$, for all $x\in\R^N\backslash\Lambda$, it holds
%$$
%\big(u_{\varepsilon}(x)\big)^{p - 2}\le \mathcal{P}_{\varepsilon}(x),\ \forall  \, x\in\R^N\backslash\Lambda.
%$$
\begin{align*}
 \frac{f({u}_{\epsilon})}{{u}_{\epsilon}}&\le ({u}_{\epsilon})^{\tilde{\kappa}}\le \frac{C\epsilon^{\alpha\tilde{\kappa}}}{|x|^{\alpha\tilde{\kappa}}}\le \frac{\epsilon^{2s + 2\kappa}}{|x|^{2s + \kappa}} = \mathcal{P}_{\epsilon}(x).
\end{align*}
This gives \eqref{Meq3.1}. As a result, $u_{\varepsilon}$ solves the original problem.
\begin{remark}\label{re4.8}
In the local case $s = 1$, we can prove the same result more easily by introducing the same penalized function $\mathcal{P}_{\varepsilon}$ in this paper. We point out here that we also answer positively to the conjecture proposed by Ambrosetti and Malchiodi in \cite{AA} in the nonlocal case.
\end{remark}

\appendix
  \renewcommand{\appendixname}{Appendix~\Alph{section}}
  \section{Appendix}
\noindent In this section we are going to verify Lemma \ref{le2.8}, %\eqref{addeq2.22},
\eqref{MMMeq2.26}, \eqref{MMMeq2.28}, \eqref{addeq2.27} and \eqref{MMMeq2.29}.
\begin{proposition}\label{prA.1} For every $i = 1,\ldots,k$, it holds
$$
\lim_{\varepsilon\to 0}\frac{c^i_{\varepsilon}}{\varepsilon^N} = c_{\lambda_i}.
$$
\end{proposition}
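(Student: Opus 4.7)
My plan is to sandwich $c^i_\varepsilon/\varepsilon^N$ between $c_{\lambda_i}$ from above and from below as $\varepsilon\to 0$, the upper estimate coming from an explicit competitor path and the lower one from a blow-up analysis of a mountain pass critical point, with the isolation property \eqref{aeq2.2} pinning down the limit. For the upper bound $\limsup_{\varepsilon\to 0}\varepsilon^{-N}c^i_\varepsilon\le c_{\lambda_i}$, I would fix a nonnegative ground state $U$ of the limiting functional $L_{\lambda_i}$ (which has polynomial decay by the nondegeneracy/decay results cited in the introduction), a point $x_i\in\Lambda_i$ with $V(x_i)=\lambda_i$, and a smooth cutoff $\chi\in C_c^\infty(\Lambda_i)$ with $\chi\equiv 1$ near $x_i$. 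Consider the path
\[
\gamma_\varepsilon(t)(x)=tT\,\chi(x)\,U\!\left(\tfrac{x-x_i}{\varepsilon}\right),\qquad t\in[0,1],
\]
with $T$ large enough that $J^i_\varepsilon(\gamma_\varepsilon(1))<0$. Since $\mathrm{supp}\,\gamma_\varepsilon(t)\subset\Lambda$, the penalization is inactive along the path; the substitution $y=(x-x_i)/\varepsilon$ together with dominated convergence (leveraging the decay of $U$) then gives $\varepsilon^{-N}J^i_\varepsilon(\gamma_\varepsilon(t))\to L_{\lambda_i}(tTU)$ uniformly in $t$. Since $\max_{t\in[0,1]}L_{\lambda_i}(tTU)=c_{\lambda_i}$ for large $T$, the upper bound follows.

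For the lower bound I would apply the mountain pass theorem on $W^{s,2}(S_i)$, whose embedding into $L^q(S_i)$ is compact for $q<2^*_s$, to obtain a critical point $u_\varepsilon$ with $J^i_\varepsilon(u_\varepsilon)=c^i_\varepsilon$ and $\int_{S_i}|u_\varepsilon|^p\ge c\varepsilon^N$. Choosing $x_\varepsilon\in\overline{S_i}$ realising $\sup u_\varepsilon\ge\delta_0>0$ (the $L^\infty$ lower bound coming from nonlocal regularity applied to the Euler--Lagrange equation), one may assume $x_\varepsilon\to x_*\in\overline{S_i}$ along a subsequence. The rescaled sequence $v_\varepsilon(y)=u_\varepsilon(\varepsilon y+x_\varepsilon)$, defined on the expanding domain $(S_i-x_\varepsilon)/\varepsilon$, is uniformly bounded in $H^s$; a weak limit $v\in H^s(\mathbb{R}^N)$ exists, local uniform convergence gives $v(0)\ge\delta_0>0$, and passing to the limit in the rescaled equation (where \eqref{eq2.1} wipes out the penalization contribution) shows $v$ solves $(-\Delta)^sv+V(x_*)v=v^{p-1}_+$ on $\mathbb{R}^N$. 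Weak lower semicontinuity then delivers
\[
\liminf_{\varepsilon\to 0}\frac{c^i_\varepsilon}{\varepsilon^N}\ge L_{V(x_*)}(v)\ge c_{V(x_*)}.
\]
The upper bound already established forces $c_{V(x_*)}\le c_{\lambda_i}$; by the strict monotonicity \eqref{aeq2.2} this means $V(x_*)\le\lambda_i$, while $V(x_*)\ge\inf_{S_i}V=\lambda_i$ (an implicit feature of the choice of $S_i$ in $(V_1)$), so $V(x_*)=\lambda_i$ and $c_{V(x_*)}=c_{\lambda_i}$.

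The principal obstacle is the lower bound, specifically the two blow-up steps: securing a nontrivial weak limit for the rescaled sequence $v_\varepsilon$ and verifying that the penalized nonlinearity contributes nothing in the limiting equation. The first depends on an $L^\infty$ (or equivalently $L^p$-concentration) lower bound on $u_\varepsilon$ that does not degenerate as $\varepsilon\to 0$, obtained by combining the Euler--Lagrange equation with Gagliardo--Nirenberg and nonlocal Schauder theory; the second relies crucially on the stronger form of \eqref{eq2.1}, namely $\sup_{\mathbb{R}^N\setminus\Lambda}P_\varepsilon(x)|x|^{2s}\to 0$, which dominates the penalized term on balls of radius $O(1/\varepsilon)$ after rescaling. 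With both in hand, the isolation property \eqref{aeq2.2} is precisely what converts the pointwise information $V(x_*)\ge\lambda_i$ into the sharp inequality $c_{V(x_*)}\ge c_{\lambda_i}$, closing the argument.
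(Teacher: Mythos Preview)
Your approach---upper bound via a rescaled test path, lower bound via blow-up of a mountain-pass critical point of $J^i_\varepsilon$ on $W^{s,2}(S_i)$, closure by the monotonicity \eqref{aeq2.2}---is exactly the paper's. The upper bound is fine; the paper uses an arbitrary $v\in C^\infty_c$ and then minimises over $v$ and $x_0$, which sidesteps the nonlocal tail error introduced by cutting off a slowly-decaying ground state, but your version works too.

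The lower bound has a real gap. You assert that the rescaled weak limit $v$ solves $(-\Delta)^s v+V(x_*)v=v^{p-1}_+$ on all of $\mathbb{R}^N$, but this requires knowing where $x_*$ sits, and you never check. Two things can go wrong: if $x_*\in\partial S_i$ the expanding domain $(S_i-x_\varepsilon)/\varepsilon$ does not exhaust $\mathbb{R}^N$; and if $x_*\notin\Lambda_i$ the limit nonlinearity is not $v^{p-1}$. Indeed your own parenthetical ``\eqref{eq2.1} wipes out the penalization contribution'' cuts the other way: when $x_*\in S_i\setminus\overline{\Lambda_i}$ the rescaled nonlinearity is $\min(v_\varepsilon^{p-1},P_\varepsilon(\varepsilon y+x_\varepsilon)v_\varepsilon)$ with $P_\varepsilon\to 0$, so the limit right-hand side is $0$, forcing $v\equiv 0$ against $v(0)\ge\delta_0$. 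That salvages part of the argument, but the boundary cases $x_*\in\partial S_i$ and $x_*\in\partial\Lambda_i$ are still open. The paper closes this with an explicit intermediate claim, $x_*\in\Lambda^{\delta}_i$ for $\delta<\tfrac12 d(\partial\Lambda_i,\partial S_i)$, proved by a cutoff energy argument in the spirit of Claim~1 of Lemma~\ref{le3.3}: if $x_*\in\overline{S_i}\setminus\Lambda^{\delta}_i$, testing the Euler--Lagrange equation against $\eta w_\varepsilon$ with $\eta$ supported away from $\Lambda^{\delta/2}_i$ and using that the penalized nonlinearity there is $o(1)\cdot w_\varepsilon$ shows the local energy is $o(\varepsilon^N)$, contradicting $\liminf w_\varepsilon(x_\varepsilon)>0$. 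Once $x_*\in\Lambda^{\delta}_i\subset\subset S_i$ the domain exhausts $\mathbb{R}^N$, and the paper invokes the nonlocal tail estimates of \cite[Proposition~3.4]{APX} (not bare weak lower semicontinuity, which does not immediately handle the potential and nonlinear terms on the exterior annulus) to pass to $\liminf\varepsilon^{-N}c^i_\varepsilon\ge c_{V(x_*)}$.
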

\begin{proof}
The achievement of $c^i_{\varepsilon}$ is easily from the fact that the embedding
$$
W^{s,2}(\Omega)\hookrightarrow L^p
$$
is compact for  $1 \leq p < 2^*_s$(see \cite{4} for more details). Thus we only need to prove \eqref{eq2.3}.

For every nonnegative $v\in C^{\infty}_c(\mathbb{R}^N)\backslash\{0\}$ and $x_0\in\Lambda_i$, let $v_{\varepsilon}(x) = v\big(\frac{x - x_0}{\varepsilon}\big)$. Obviously, $supp\ v_{\varepsilon}\subset\Lambda_i$ and $\gamma(t) = tTv_{\varepsilon}\in \Gamma^i_{\varepsilon}$ for $\varepsilon$ small enough and $T$ large enough. Therefore,
\begin{align*}
  c^i_{\varepsilon}&\leq  \max_{t\in[0,1]}J^i_{\varepsilon}(\gamma(t))\\
                   &\leq \varepsilon^N\max_{t > 0}\Big(\frac{t^2}{2}\int_{\mathbb{R}^N}\int_{\mathbb{R}^N}\frac{|v(x) - v(y)|^2}{|x - y|^{N + 2s}}dxdy\\
                   &\quad+ \frac{t^2}{2}\int_{\mathbb{R}^N}V(\varepsilon x + x_0)|v|^2 dx - \int_{\mathbb{R}^N}F(tv)dx\Big)
\end{align*}
and then
\begin{align*}
 \limsup_{\varepsilon\to 0} \frac{c^i_{\varepsilon}}{\varepsilon^N}
                      &\leq \limsup_{\varepsilon\to 0}\max_{t > 0}\Big(\frac{t^2}{2}\int_{\mathbb{R}^N}\int_{\mathbb{R}^N}\frac{|v(x) - v(y)|^2}{|x - y|^{N + 2s}}dxdy\\
                      &\qquad+ \frac{t^2}{2}\int_{\mathbb{R}^N}V(\varepsilon x + x_0)|v|^2 dx - \int_{\mathbb{R}^N}F(tv)dx\Big)\\
                      & = \max_{t > 0}L_{v(x_0)}(tv).
\end{align*}
Hence, by the arbitrariness  of $v$ and $x_0$, we have
\begin{eqnarray}\begin{split}\label{eqA.1}
 \limsup_{\varepsilon\to 0} \frac{c^i_{\varepsilon}}{\varepsilon^N}&\leq c_{\lambda_i}.
\end{split}\end{eqnarray}

On the other hand, let $w_{\varepsilon}$ be a critical point corresponding to $c^i_{\varepsilon}$, i.e., $J^i_{\varepsilon}(w_{\varepsilon}) = c^i_{\varepsilon}$ and
\begin{equation}\label{eqA.2}
\varepsilon^{2s}\int_{ S_i}\frac{w_{\varepsilon}(x) - w_{\varepsilon}(y)}{|x - y|^{N + 2s}}dy + V(x)w_{\varepsilon}(x) = \mathfrak{g}_{\varepsilon}(w_{\varepsilon}),\ \  x\in S_i.
\end{equation}
It follows that
\begin{equation*}
\varepsilon^{2s}\int_{ S_i}\int_{ S_i}\frac{w_{\varepsilon}(x) - w_{\varepsilon}(y)}{|x - y|^{N + 2s}}w_{\varepsilon}(x)dydx + \int_{S_i}V(x)|w_{\varepsilon}(x)|^2 = \int_{S_i}\mathfrak{g}_{\varepsilon}(w_{\varepsilon})w_{\varepsilon}.
\end{equation*}
Then by \eqref{MMMeq2.2}, it holds
\begin{align*}
&\quad\quad\varepsilon^{2s}\int_{ S_i}\int_{ S_i}\frac{w_{\varepsilon}(x) - w_{\varepsilon}(y)}{|x - y|^{N + 2s}}w_{\varepsilon}(x)dydx + \int_{S_i}V(x)|w_{\varepsilon}(x)|^2\\
 &\leq C\big(\|w_{\varepsilon}\|^{p - 1}_{L^{\infty}(\Lambda_i)} + \|w_{\varepsilon}\|^{\tilde{\kappa}}_{L^{\infty}(\Lambda_i)}\big)\Big(\varepsilon^{2s}\int_{ S_i}\int_{ S_i}\frac{w_{\varepsilon}(x) - w_{\varepsilon}(y)}{|x - y|^{N + 2s}}w_{\varepsilon}(x)dydx + \int_{S_i}V(x)|w_{\varepsilon}(x)|^2\Big),
\end{align*}
from which %and using the same proof of $(1)$ in Lemma \ref{le3.3},
we conclude that there exists $x^i_{\varepsilon}\in\overline{\Lambda_i}$ such that for $\rho > 0$,
\begin{equation}\label{eqA.3}
  \liminf_{\varepsilon\to 0}\|w_{\varepsilon}\|_{L^{\infty}(B_{\varepsilon\rho}(x^i_{\varepsilon}))} > 0.
\end{equation}
Going if necessary to a subsequence, we assume that
\begin{equation}\label{eqA.4}
  \lim_{\varepsilon\to 0}x^i_{\varepsilon}\to x^i\in\overline{\Lambda_i}.
\end{equation}

Now, let $\tilde{w}_{\varepsilon}(x) = w_{\varepsilon}(x^i_{\varepsilon} + \varepsilon x)$, then $\tilde{w}_{\varepsilon}$ satisfies
\begin{equation}\label{eqA.5}
\int_{ S^i_{\varepsilon}}\frac{\tilde{w}_{\varepsilon}(x) - \tilde{w}_{\varepsilon}(y)}{|x - y|^{N + 2s}}dy + V_{\varepsilon}(x)\tilde{w}_{\varepsilon}(x) = \tilde{\mathfrak{g}}_{\varepsilon}(\tilde{w}_{\varepsilon})
\ \ x\in S^i_{\varepsilon},
\end{equation}
where $V_{\varepsilon}(x) = V(x^i_{\varepsilon} + \varepsilon x)$, $S^i_{\varepsilon} = \{x\in\mathbb{R}^N:\varepsilon x + x^i_{\varepsilon}\in S\}$ and $\tilde{\mathfrak{g}}_{\varepsilon}(\tilde{w}_{\varepsilon}) = g(\varepsilon x + x^i_{\varepsilon},\tilde{w}_{\varepsilon})$. Moreover, by \eqref{eqA.1}, we have
$$
\sup_{\varepsilon > 0}\|\tilde{w}_{\varepsilon}\|_{W^{s,2}(B_R)} < \infty
$$
for every $R\in(0,+\infty)$. Thus, by diagonal argument, we conclude that $\tilde{w}_{\varepsilon}\rightharpoonup \tilde{w}$ weakly in $W^{s,2}(B_R)$ for every $R>0$. Moreover, it is easy to check by Fatou's Lemma that $\tilde{w}\in H^s(\mathbb{R}^N)$.
Then, by \eqref{eqA.4}, using Corollary 7.2 in \cite{4} and  taking  limit in \eqref{eqA.5}, we conclude that
\begin{equation*}
 \int_{\mathbb{R}^N}\frac{\tilde{w}(x) - \tilde{w}(y)}{|x - y|^{N + 2s}}dy + V(x^i)\tilde{w} = \chi_{\Lambda^i_*}f(\tilde{w})\ \ x\in \mathbb{R}^N,
\end{equation*}
where $\Lambda^*_i$ is the limit of the set $\Lambda^i_{\varepsilon} = \{x\in\mathbb{R}^N:\varepsilon x + x^i_{\varepsilon}\in\Lambda_i\}$. But by \eqref{eqA.3} and using the standard bootstrap argument in Appendix D in \cite{20}, we have
$$
\|\tilde{w}\|_{L^{\infty}(B_\rho(0))} = \lim_{\varepsilon\to 0}\|\tilde{w}_{\varepsilon}\|_{L^{\infty}(B_\rho(0))}\geq\liminf_{\varepsilon\to 0}\|{w}_{\varepsilon}\|_{L^{\infty}(B_\rho(0))} > 0,
$$
which combined  with  the Liouville-type results (see Lemma 3.3 in \cite{APX}) implies that $\Lambda^i_* = \mathbb{R}^N$. Hence we have
$$
(-\Delta)^s\tilde{w} + V(x^i)\tilde{w} = f(\tilde{w})\ \ \text{in}\ \R^N.
$$
Proceeding as one proves  Lemma 3.3 of \cite{ADP-AA-2021}, we have
\begin{align*}
 \liminf_{\varepsilon\to 0} \frac{c^i_{\varepsilon}}{\varepsilon^N}
& \geq L_{V(x^i)}(\tilde{w}) + o_R(1)\\
&\quad + \liminf_{\varepsilon\to 0}\frac{1}{\varepsilon^N}\Big(\frac{1}{2}\int_{S^i_{\varepsilon}\backslash B_R}dx
\int_{S^i_{\varepsilon}}
 \frac{|\tilde{w}_{\varepsilon}(x) - \tilde{w}_{\varepsilon}(y)|^2}{|x - y|^{N + 2s}}dy\\
&\quad + \frac{1}{2}\int_{S^i_{\varepsilon}\backslash B_R}V_{\varepsilon}(x)\tilde{w}^2_{\varepsilon}(x) dx - \int_{S^i_{\varepsilon}\backslash B_R}\widetilde{\mathfrak{G}}_{\varepsilon}(\tilde{w}_{\varepsilon}(x))dx\Big)\\
&\geq  c_{V(x^i)} + o_R(1)
\end{align*}
% Moreover, using equations (3.8)-(3.11) in \cite{APX}, we have
%\begin{equation}\label{eqA.7}
%|\liminf_{\varepsilon\to 0}T_{\varepsilon,R}| = o_R(1).
%\end{equation}
%Letting $R\to\infty$, we see
%\begin{align*}
% \liminf_{\varepsilon\to 0} \frac{c^i_{\varepsilon}}{\varepsilon^N}\geq c_{V(x^i)}.
%\end{align*}
 Therefore,
\begin{equation*}\label{eq2.10}
\liminf_{\varepsilon\to 0}\frac{c^i_{\varepsilon}}{\varepsilon^N}\geq c_{\lambda_i},
\end{equation*}
which and  \eqref{eqA.1} complete the proof.
\end{proof}

\begin{lemma}\label{leA.2} The estimates %\eqref{addeq2.22},
\eqref{MMMeq2.26}, \eqref{MMMeq2.28}, \eqref{addeq2.27} and \eqref{MMMeq2.29} hold.

\end{lemma}

\begin{proof}
Hereafter, we define $\hat{\eta}_{\varepsilon}(x)=\eta(2\varepsilon x)=\eta_{\varepsilon}(2x)$ for all $x\in\R^N$.
We first give the proof of \eqref{MMMeq2.28}. By the definition of $\bar{\eta}_{\varepsilon}$, we have
\begin{align*}
2T^2_{\varepsilon}(\tilde{\eta}_{\varepsilon})/\varepsilon^N &=\sum_{i=1}^k\varepsilon^{2s-N}\int_{\R^N}dx\int_{\R^N}(u_{\varepsilon}(x) -u_{\varepsilon}(y))\Big(\eta\big(2(x-p^i_{\varepsilon}-\varepsilon z_i)\big)\\
&\qquad-\eta\big(2(y-p^i_{\varepsilon}-\varepsilon z_i)\big)\Big)u_{\varepsilon}(y){|x - y|^{-N - 2s}}dy\\
&=\sum_{i=1}^k\int_{\R^N}dx\int_{\R^N}\frac{(v^i_{\varepsilon}(x) - v^i_{\varepsilon}(y))(\hat{\eta}_{\varepsilon}(x)-\hat{\eta}_{\varepsilon}(y))v^i_{\varepsilon}(y)}{|x - y|^{N + 2s}}dy\\
&:=\sum_{i=1}^kT^{2,i}_{\varepsilon}(\eta).
\end{align*}
For each $i=1,\ldots,k$, dividing $\mathbb{R}^N$ into several regions, we have
\begin{align*}
T^{2,i}_{\varepsilon}(\eta)
&= \int_{B_{\frac{\beta}{\varepsilon}}}dx\int_{B_{\frac{\beta}{\varepsilon}}}\frac{(v^i_{\varepsilon}(x) - v^i_{\varepsilon}(y))(\hat{\eta}_{\varepsilon}(x)-\hat{\eta}_{\varepsilon}(y))v^i_{\varepsilon}(y)}{|x - y|^{N + 2s}}dy\\
&\quad + \int_{B_{\frac{\beta}{\varepsilon}}}dx\int_{B^c_{\frac{\beta}{\varepsilon}}}\frac{(v^i_{\varepsilon}(x) - v^i_{\varepsilon}(y))(\hat{\eta}_{\varepsilon}(x)-\hat{\eta}_{\varepsilon}(y))v^i_{\varepsilon}(y)}{|x - y|^{N + 2s}}dy\\
& \quad + \int_{B^c_{\frac{\beta}{\varepsilon}}}dx\int_{B_{\frac{\beta}{\varepsilon}}}\frac{(v^i_{\varepsilon}(x) - v^i_{\varepsilon}(y))(\hat{\eta}_{\varepsilon}(x)-\hat{\eta}_{\varepsilon}(y))v^i_{\varepsilon}(y)}{|x - y|^{N + 2s}}dy\\
&:=\sum_{j=1}^3 T^{2,i,j}_{\varepsilon}(\eta).
\end{align*}
For $T^{2,i,1}_{\varepsilon}(\eta)$, by Cauchy inequality, we  have
\begin{align*}
|T^{2,i,1}_{\varepsilon}(\eta)|^2
&\le  C\int_{B_{\frac{\beta}{\varepsilon}}}|v^i_{\varepsilon}(y)|^2dy\int_{B_{\frac{\beta}{\varepsilon}}}\frac{|\hat{\eta}_{\varepsilon}(x)-\hat{\eta}_{\varepsilon}(y)|^2}{|x - y|^{N + 2s}}dx\\
&\le C\varepsilon^2\int_{B_{\frac{\beta}{\varepsilon}}}|v^i_{\varepsilon}(z)|^2dy\int_{B_{\frac{2\beta}{\varepsilon}}}\frac{1}{|z|^{N + 2s-2}}dx\\
&=C\varepsilon^{2s}.
\end{align*}
For $T^{2,i,2}_{\varepsilon}(\eta)$, by the definition of $\eta$, we  have
\begin{align*}
T^{2,i,2}_{\varepsilon}(\eta)
&\le \int_{B_{\frac{\beta}{\varepsilon}}}dx\int_{B^c_{\frac{\beta}{\varepsilon}}}\frac{ v^i_{\varepsilon}(x)\hat{\eta}_{\varepsilon}(x)v^i_{\varepsilon}(y)}{|x - y|^{N + 2s}}dy.
%&\quad  + \int_{B_{\frac{\beta}{\varepsilon}}}dx\int_{B_{\frac{3\beta}{\varepsilon}}\backslash B_{\frac{\beta}{\varepsilon}}}\frac{(v^i_{\varepsilon}(y) - v^i_{\varepsilon}(x))(\hat{\eta}_{\varepsilon}(x)-\hat{\eta}_{\varepsilon}(y))v^i_{\varepsilon}(y)}{|x - y|^{N + 2s}}dy\Big|\\
%&\le \int_{B_{\frac{\beta}{\varepsilon}}}dx\int_{B^c_{\frac{3\beta}{\varepsilon}}}\frac{|v^i_{\varepsilon}(y) - v^i_{\varepsilon}(x)|\hat{\eta}_{\varepsilon}(x)|v^i_{\varepsilon}(y)|}{|x - y|^{N + 2s}}dy+C\varepsilon^s
\end{align*}
But, using the similar estimate of $T^{2,i,1}_{\varepsilon}(\eta)$ and fractional Hardy inequality \eqref{eq1.4}, we have
\begin{align*} &\quad\Big|\int_{B_{\frac{\beta}{\varepsilon}}}dx\int_{B^c_{\frac{\beta}{\varepsilon}}}\frac{ v^i_{\varepsilon}(x)\hat{\eta}_{\varepsilon}(x)v^i_{\varepsilon}(y)}{|x - y|^{N + 2s}}dy\Big|\\
&\le \int_{B_{\frac{\beta}{\varepsilon}}}dx\int_{B^c_{\frac{3\beta}{\varepsilon}}}\frac{ |v^i_{\varepsilon}(x)|\hat{\eta}_{\varepsilon}(x)|v^i_{\varepsilon}(y)|}{|x - y|^{N + 2s}}dy\\
&\quad + \int_{B_{\frac{\beta}{\varepsilon}}}dx\int_{B_{\frac{3\beta}{\varepsilon}}\backslash B_{\frac{\beta}{\varepsilon}}}\frac{ |v^i_{\varepsilon}(x)||\hat{\eta}_{\varepsilon}(x)-\hat{\eta}_{\varepsilon}(y)||v^i_{\varepsilon}(y)|}{|x - y|^{N + 2s}}dy\Big|\\
&\le\int_{B_{\frac{\beta}{\varepsilon}}}dx\int_{B^c_{\frac{3\beta}{\varepsilon}}}\frac{ |v^i_{\varepsilon}(x)|\hat{\eta}_{\varepsilon}(x)|v^i_{\varepsilon}(y)|}{|x - y|^{N + 2s}}dy + C\varepsilon^s\\
&\le \Big(\int_{B_{\frac{\beta}{\varepsilon}}}(\hat{\eta}_{\varepsilon}(x)v^i_{\varepsilon}(x))^2dx
\int_{B^c_{\frac{3\beta}{\varepsilon}}}\frac{1}{|x - y|^{N + 2s}}dy\Big)^{\frac{1}{2}}\\
&\quad\cdot\Big(\int_{B^c_{\frac{3\beta}{\varepsilon}}}\frac{\big(v^i_{\varepsilon}(y)\big)^2}{|y|^{2s}}dy
\int_{B_{\frac{\beta}{\varepsilon}}}\frac{|y|^{2s}}{|x - y|^{N + 2s}}dx\Big)^{\frac{1}{2}} + C\varepsilon^s\\
&\le C\varepsilon^s.
\end{align*}
Hence, it holds
\begin{equation*}
\limsup_{\varepsilon\to 0}T^{2,i,2}_{\varepsilon}(\eta)\le0.
\end{equation*}
Similarly, one has
\begin{equation*}
\limsup_{\varepsilon\to 0}T^{2,i,3}_{\varepsilon}(\eta)\le0.
\end{equation*}
So \begin{equation*}
\limsup_{\varepsilon\to 0}T^{2,i}_{\varepsilon}(\eta)\le0
\end{equation*}
and \begin{equation*}
\limsup_{\varepsilon\to 0}\frac{T^{2}_{\varepsilon}(\eta)}{\varepsilon^N}\le0.
\end{equation*}

Secondly, we prove \eqref{MMMeq2.26}. By the definition of $\eta$, we have
\begin{align*}
|T^{1}_{\varepsilon}(\eta)/2|^2
&\le\varepsilon^{4s}\Big(\int_{B_{\beta}(p^i_{\varepsilon}+\varepsilon z_i)}(u_{\varepsilon}(x))^2dx\int_{B_{\beta}(p^j_{\varepsilon}+\varepsilon z_j)}\frac{1}{|x - y|^{N+2s}}dy\Big)\\
&\cdot \Big(\int_{B_{\beta}(p^j_{\varepsilon}+\varepsilon z_j)}(u_{\varepsilon}(y))^2dy\int_{B_{\beta}(p^j_{\varepsilon}+\varepsilon z_j)}\frac{1}{|x - y|^{N+2s}}dx\Big)\\
&= \varepsilon^{4N+4s} \Big(\int_{B_{\frac{\beta}{\varepsilon}}}(v^i_{\varepsilon}(x))^2dx\int_{B_{\frac{\beta}{\varepsilon}}}\frac{1}{|\varepsilon x + p^i_{\varepsilon}+\varepsilon z_i - \varepsilon y - p^j_{\varepsilon}-\varepsilon z_j|^{N+2s}}dy\Big)\\
&\cdot \Big(\int_{B_{\frac{\beta}{\varepsilon}}}(v^j_{\varepsilon}(y))^2dy\int_{B_{\frac{\beta}{\varepsilon}}}\frac{1}{|\varepsilon x + p^i_{\varepsilon}+\varepsilon z_i- \varepsilon y-p^j_{\varepsilon}-\varepsilon z_j|^{N+2s}}dx\Big)\\
&= \varepsilon^{2N} \Big(\int_{B_{\frac{\beta}{\varepsilon}}}(v^i_{\varepsilon}(x))^2dx\int_{B_{\frac{\beta}{\varepsilon}}}\frac{1}{|x - y + \frac{p^i_{\varepsilon}+\varepsilon z_i -  p^j_{\varepsilon}+\varepsilon z_j}{\varepsilon}|^{N+2s}}dy\Big)\\
&\cdot \Big(\int_{B_{\frac{\beta}{\varepsilon}}}(v^j_{\varepsilon}(y))^2dy\int_{B_{\frac{\beta}{\varepsilon}}}\frac{1}{|x - y + \frac{p^i_{\varepsilon}+\varepsilon z_i -  p^j_{\varepsilon}-\varepsilon z_j}{\varepsilon}|^{N+2s}}dx\Big)\\
&\le C\varepsilon^{2N+4s}.
\end{align*}
Then we have
$$
\frac{T^1_{\varepsilon}(\eta)}{\varepsilon^N}\le C\varepsilon^s,
$$
which gives \eqref{MMMeq2.26}.

Thirdly, we give the proof of \eqref{MMMeq2.29}. Denoting $A_{\varepsilon}=\R^N\backslash\bigcup_{i=1}^kB_{2\beta}(p^i_{\varepsilon}+\varepsilon z_i)$,  one can check  that
\begin{align*}
\varepsilon^{-2s}T^{3}_{\varepsilon}(\breve{\eta})&=\int_{\R^N}\int_{\R^N}\frac{|\breve{\eta}_{\varepsilon}(x)u_{\varepsilon}(x) - \breve{\eta}_{\varepsilon}(y)u_{\varepsilon}(y)|^2}{|x-y|^{N+2s}}dxdy\\
&= \int_{A_{\varepsilon}}dx\int_{A_{\varepsilon}}\frac{|u_{\varepsilon}(x) - u_{\varepsilon}(y)|^2}{|x-y|^{N+2s}}dxdy + \int_{A_{\varepsilon}}dx\int_{A^c_{\varepsilon}}\frac{|u_{\varepsilon}(x) - \breve{\eta}_{\varepsilon}(y)u_{\varepsilon}(y)|^2}{|x-y|^{N+2s}}dxdy\\
&\quad + \int_{A^c_{\varepsilon}}dx\int_{A_{\varepsilon}}\frac{|\breve{\eta}_{\varepsilon}(x)u_{\varepsilon}(x) - u_{\varepsilon}(y)|^2}{|x-y|^{N+2s}}dxdy + \int_{A^c_{\varepsilon}}dx\int_{A^c_{\varepsilon}}\frac{|\breve{\eta}_{\varepsilon}(x)u_{\varepsilon}(x) - \breve{\eta}_{\varepsilon}(y)u_{\varepsilon}(y)|^2}{|x-y|^{N+2s}}dxdy\\
&\le \int_{A_{\varepsilon}}dx\int_{A_{\varepsilon}}\frac{|u_{\varepsilon}(x) - u_{\varepsilon}(y)|^2}{|x-y|^{N+2s}}dxdy+ C\int_{A_{\varepsilon}}dx\int_{A^c_{\varepsilon}}\frac{|u_{\varepsilon}(x) - u_{\varepsilon}(y)|^2}{|x-y|^{N+2s}}dxdy\\
&\quad + C\int_{A_{\varepsilon}}dx\int_{A^c_{\varepsilon}}\frac{|(1 - \breve{\eta}_{\varepsilon}(y)) u_{\varepsilon}(y)|^2}{|x-y|^{N+2s}}dxdy + \int_{A^c_{\varepsilon}}dx\int_{A^c_{\varepsilon}}\frac{|\breve{\eta}_{\varepsilon}(x)u_{\varepsilon}(x) - \breve{\eta}_{\varepsilon}(y)u_{\varepsilon}(y)|^2}{|x-y|^{N+2s}}dxdy\\
&\le C\varepsilon^{N}  + C\int_{A_{\varepsilon}}dx\int_{A^c_{\varepsilon}}\frac{|(1 - \breve{\eta}_{\varepsilon}(y)) u_{\varepsilon}(y)|^2}{|x-y|^{N+2s}}dxdy\\
&\quad + \int_{A^c_{\varepsilon}}dx\int_{A^c_{\varepsilon}}\frac{|\breve{\eta}_{\varepsilon}(x)u_{\varepsilon}(x) - \breve{\eta}_{\varepsilon}(y)u_{\varepsilon}(y)|^2}{|x-y|^{N+2s}}dxdy\\
&\le C\varepsilon^{N}.
\end{align*}
As a result, we get \eqref{MMMeq2.29}.

The proof of \eqref{addeq2.27} is similar and we omit it.
\end{proof}

\section*{Acknowledgments}
The authors are grateful to the referee  for carefully reading the manuscript and for many valuable comments which largely improved the article. This work was partially supported by NSFC grants (No.12101150; No.11831009) and the Science and Technology Foundation of Guizhou Province ([2021]ZK008).

\section*{Conflict of interest}

The authors declare there is no conflicts of interest.


\begin{thebibliography}{99}



\bibitem{100}
 C. Alves,  O. Miyagaki,
 Existence and concentration of solution for a class of fractional elliptic equation in $\mathbb{R}^N$ via penalization method,
 \emph{Calc. Var. Partial Differential Equations}, \textbf{55}  (2016), 1-19.
 {https://doi.org/10.1007/s00526-016-0983-x}

\bibitem{5}
 A. Ambrosetti, M. Badiale,  S. Cingolani,
 {Semiclassical states of nonlinear Schr\"{o}dinger equations},
 \emph{Arch. Ration. Mech. Anal.}, \textbf{140}  (1997), 285--300 {https://doi.org/10.1007/s002050050067}

\bibitem{6}
 A. Ambrosetti,  A. Malchiodi,
 Perturbation methods and semilinear elliptic problems on $\mathbb{R}^N$,
 Progress in Mathmatics, vol. 240. Birfh$\ddot{a}$user Verlag, Basel (2006)
 {https://doi.org/10.1007/3-7643-7396-2}

\bibitem{14}
 A. Ambrosetti,  A. Malchiodi, W. M. Ni,
{Singularly perturbed elliptic equations with symmetry: existence of solutions concentrating on spheres},
 \emph{I. Comm. Math. Phys.,} \textbf{235} (2003), 427--466.
{https://doi.org/10.1007/s00220-003-0811-y}

\bibitem{AA}
 A. Ambrosetti, A. Malchiodi,
 Concentration phenomena for NLS: Recent results and new perspectives, perspectives in nonlinear partial differential equations,
{Contemp. Math.}, \textbf{446}  (2007), 19--30.
 {https://doi.org/10.1090/conm/446/08624}

%\bibitem{21}
%{Albert, J. P.,Bona, J. L., Saut, J.-C}: {Model equations for waves in stratified fluids}. Proc. Roy.
%Soc. London Ser. A \textbf{453}, 1233-1260 (1997)

\bibitem{APX}
 X. An, S. Peng,  C. Xie,
 {Semi-classical solutions for fractional Schr\"odinger equations with potential vanishing at infinity},
 \emph{J. Math. Phys.}, \textbf{60} (2019), 021501.
 {https://doi.org/10.1063/1.5037126}

\bibitem{ADP-AA-2021}
 X. An, L. Duan,  Y. Peng,
 {Semi-classical analysis for fractional Schr\"odinger equations with fast decaying potentials},
 \emph{Applicable Analysis},
 {https://doi.org/10.1080/00036811.2021.1880571}

%\bibitem{30}
%{Bonheure, D., Van Schaftingen, J.}: {Groundstates for the nonlinear Schr\"{o}dinger equation with potential
%vanishing at infinity}. Ann. Mat. Pura Appl. (4) \textbf{189}(2), 273-301 (2010)
\bibitem{BDP}
 T. Bartsch, E. N. Dancer, S. Peng,
 On multi-bump semiclassical bound states of nonlinear Schr\"{o}dinger euqations with electromagnetic fields,
 \emph{Adv. Differential Equations}, \textbf{7} (2006), 781--812.

\bibitem{16}
 D. Bonheure, S. Cingolani, M. Nys,
 {Nonlinear Schr\"{o}dinger equation: concentration on circles driven by an external maganetic field},
 \emph{Calc. Var. Partial Differential Equations}, \textbf{55} (2016), Article 82 33p
{https://doi.org/10.1007/s00526-016-1013-8}

\bibitem{J.B-L.J-DCDS-2007}
  J. Byeon, L. Jeanjean,
 Multi-peak standing waves for nonlinear Schr\"{o}dinger equations with a general nonlinearity,
  \emph{Discrete Contin. Dyn. Syst.}, \textbf{19} (2007), 255--269.
{https://doi.org/10.3934/dcds.2007.19.255}

\bibitem{CN}
 D. Cao,  E. S. Noussair,
 {Multi-bump standing waves with a critical frequency for nonlinear Schr\"{o}dinger equations},
 \emph{J. Differential Equations}, \textbf{203} (2004), 292--312.
 {https://doi.org/10.1016/j.jde.2004.05.003}

%\bibitem{18}
%{Cerami, G., Passseo, D., Solimini, S.}: {Infinitely many positive solutions to some scalar field equations with nonsymmetric coefficients}. Comm. Pure Appl. Math., \textbf{66} (2013), 372-413.

\bibitem{25}
 G. Chen,  Y. Zheng,
 {Concentration phenomena for fractional noninear Schr\"{o}dinger equations,}
 \emph{Commun. Pure Appl. Anal}, \textbf{13}  (2014), 2359--2376.
 {https://doi.org/10.3934/cpaa.2014.13.2359}

\bibitem{24}
 L. Caffarelli,  L. Silvestre,
 {An extension problem related to the fractional Laplacian},
 \emph{Comm. Partial Differential Equations}, \textbf{32}  (2007), 1245--1260.
 {https://doi.org/10.1080/03605300600987306}


\bibitem{7}
 S. Cingolani, M. Lazzo,
 {Multiple semiclassical standing waves for a class of nonlinear Schr\"{o}dinger
equations},
 \emph{Topol. Methods Nonlinear Anal.}, \textbf{10} (1997), 1--13.
  {https://doi.org/10.12775/TMNA.1997.019}

\bibitem{8}
 S. Cingolani, M. Lazzo,
 {Multiple positive solutions to nonlinear Schr\"{o}dinger equations with competing potential functions},
 \emph{J. Differential Equations}, \textbf{160} (2000) , 118--138.
 {https://doi.org/10.1006/jdeq.1999.3662}

\bibitem{CP}
  V. Coti-Zelati,   P. H. Rabinowitz,
 {Homoclinic orbits for a second order Hamiltonian systems possessing superquadratic potentials},
 \emph{J. Amer. Math. Soc.}, \textbf{4} (1991), 693-727.
 {https://doi.org/10.1090/S0894-0347-1991-1119200-3}

\bibitem{9}
 M. del Pino, P.L. Felmer,
 {Local mountain passes for semilinear elliptic problems in unbounded domains,}
 \emph{Calc. Var. Partial Differential Equations}, \textbf{4}  (1996), 121--137.
 {https://doi.org/10.1007/BF01189950}

\bibitem{10}
 M. del Pino, P. L. Felmer,
 {Semi-classical states for nonlinear Schr\"{o}dinger equations},
 \emph{J. Funct. Anal.}, \textbf{149} (1997), 245--265.
 {https://doi.org/10.1006/jfan.1996.3085}

\bibitem{MM3}
 M. del Pino, P. L. Felmer,
 {Multi-peak bound states for nonlinear Schr\"{o}dinger equations},
 \emph{Ann. Inst. H. Poincar\`{e}, Analyse non lin\`{e}aire}, \textbf{15}  (1998), 127--149.
 {https://doi.org/10.1016/s0294-1449(97)89296-7}


\bibitem{15}
 M. del Pino, M. Kowalczyk, J. Wei,
 {Concentration on curves for nonlinear Schr\"{o}dinger equations}.,
 \emph{Comm. Pure Appl. Math.}, \textbf{60} (2006), 113--146.
 {https://doi.org/10.1002/cpa.20135}



\bibitem{4}
 E. Di Nezza, G. Palatucci, E. Valdinoci,
 {Hitchhiker's guide to the fractional Sobolev spaces},
 \emph{Bull. Sci. Math.}, \textbf{136} (2012), 521--573.
 {https://doi.org/10.1016/j.bulsci.2011.12.004}

%\bibitem{1}
% P. Felmer, A. Quaas, J. Tan,
% {Positive solutions of the nonlinear Schr\"odinger equation with the fractional Laplacian},
% \emph{Proc. Roy. Soc. Edinburgh Sect. A}, \textbf{142}, 1237-1262 (2012)


\bibitem{11}
  A. Floer, A. Weinstein,
 {Nonspreading wave packets for the cubic Schr\"{o}dinger equation with a bounded potential},
 \emph{J. Funct. Anal.}, \textbf{69}  (1986), 397--408.
{https://doi.org/10.1016/0022-1236(86)90096-0}

\bibitem{19}
 R. L. Frank, E. Lenzmann,
 {Uniqueness of non-linear ground states for fractional Laplacians in $\mathbb{R}$},
 \emph{Acta. Math}, \textbf{210} (2013), 261--318.
 {https://doi.org/10.1007/s11511-013-0095-9}

\bibitem{20}
 R. L. Frank, E. Lenzmann, L. Silvestre,
 {Uniqueness of radial solutions for the fractional Laplacians},
 \emph{Comm. Pure. Appl. Math.}, \textbf{69} (2016),  1671--1726.
 {https://doi.org/10.1002/cpa.21591}

\bibitem{FS}
 R. L. Frank, R. Seiringer,
 {Non-linear ground state representations and sharp Hardy inequalities},
 \emph{J. Funct. Anal.}, \textbf{255} (2008), 3407--3430.
 {https://doi.org/10.1016/j.jfa.2008.05.015}

\bibitem{26}
 M. M. Fall, F. Mahmoudi, E. Valdinoci,
 {Ground states and concentration phenomena for the fractional Schr\"{o}dinger equation},
 \emph{Nonlinearity}, \textbf{28} (2015), 1937--1961.
 {https://doi.org/10.1088/0951-7715/28/6/1937}

\bibitem{FQT}
 P. Felmer,  A. Quaas,  J. Tan,
 Positive solutions of the nonlinear Schr\"odinger equation with the fractional Laplacian,
 \emph{Proc. Roy. Soc. Edinburgh Sect. A,} \textbf{142} (2012), 1237--1262.
 {https://doi.org/10.1017/S0308210511000746}

\bibitem{T.Hu-W.Shuai-JDE-2018}
 T. Hu,  W. Shuai,
 Multi-peak solutions to Kirchhoff equations in $\R^3$ with general nonlinearlity,
 \emph{J. Differential Equations,} \textbf{265} (2018), 3587--3617.
 {https://doi.org/10.1016/j.jde.2018.05.012}

\bibitem{3}
 {N. Laskin,}
{Fractional quantum mechanics and Levy path integrals},
\emph{Phys. Lett. A}, \textbf{268} (2000) , 298--305.
 {https://doi.org/10.1016/S0375-9601(00)00201-2}

\bibitem{2}
{N. Laskin, }
 {Fractional Schr\"{o}dinger equation},
 \emph{Phys. Rev. E,} \textbf{66} (2002), 056108.
 {https://doi.org/10.1103/PhysRevE.66.056108}

%\bibitem{23}
%{Ma, L., Zhao, L}. {Classification of positive solitary solutions of the nonlinear Choquard equation}.
%Arch. Ration. Mech. Anal. \textbf{195}, no. 2, 455-467 (2010)

%\bibitem{28}
% {V. Moroz, J.  Van Schaftingen, }
%{Semi-classical states for the Choquard equation},
% \emph{Calc. Var. Partial Differential Equations}, \textbf{52}(199-235), (2015) {https://doi.org/10.1007/s00526-014-0709-x}

\bibitem{12}
 {Y. G. Oh, }
 {Existence of semiclassical bound states of nonlinear Schr\"{o}dinger equations with potentials of the class $(V)_a$},
 \emph{Comm. Partial Differential Equations,} \textbf{13} (1988), 1499--1519. {https://doi.org/10.1080/03605308808820585}





\bibitem{13}
 {P. H. Rabinowitz, }
 {On a class of nonlinear Schr\"{o}dinger equations},
 \emph{Z. Angew. Math. Phys.}, \textbf{43} (1992), 270--291. {https://doi.org/10.1007/BF00946631}

\bibitem{S}
 S. Secchi,
 Ground state solutions for nonlinear fractional Schr\"odinger equations in $\mathbb{R}^N$.
 \emph{J. Math. Phys.}, \textbf{54} (2013), 031501. {https://doi.org/10.1063/1.4793990}



\bibitem{27}
 X. Shang, J. Zhang,
 {Concentrating solutions of nonlinear fractional Schr\"{o}dinger equation with potentials},
 \emph{ J. Differential Equations},  \textbf{258} (2015), 1106--1128.
 {https://doi.org/10.1016/j.jde.2014.10.012}


%\bibitem{29}
%{Willem M.}: {Analyse harmonique r\'{e}elle,} Hermann, Paris, (1995)

\bibitem{MW}
 M. Willem,
 {Minimax theorems},
 Birkh\"{a}user, 1996.


%\bibitem{22}
%{Weinstein, M. I.}: {Existence and dynamic stability of solitary wave solutions of equations arising
%in long wave propagation}. Comm. Partial Differential Equations \textbf{12}, no. 10, 1133-1173 (1987)

\bibitem{17}
 J. Wei, S. Yan,
 {Infinitely many positive solutions for the nonlinear Schr\"{o}dinger equations in $\mathbb{R}^N$},
 \emph{Calc. Var. Partial Differential Equations},  \textbf{37}  (2010), 423--439. {https://doi.org/10.1007/s00526-009-0270-1}

\bibitem{M.I.-Weinstein-CPDE-1987}
 M. I. Weinstein,
 {Existence and dynamic stability of solitary wave solutions of equations arising in long wave propagation},
 \emph{Comm. Partial Differential Equations}, \textbf{12} (1987), 1133--1173.
{https://doi.org/10.1080/03605308708820522}



\end{thebibliography}
\end{document}